\theoremstyle{definition}
\newtheorem{defi}{Definition}[section]
\newtheorem{remark}[defi]{Remark}
\theoremstyle{plain}
\newtheorem{theorem}[defi]{Theorem}
 \newtheorem{prop}[defi]{Proposition}
\newtheorem{lemma}[defi]{Lemma}
\numberwithin{equation}{section}
\let\TagsLeftOn\tagsleft@true
\let\TagsLeftOff\tagsleft@false
\newcommand{\D}{\mathbb{D}}
\newcommand{\R}{\mathbb R}
\newcommand{\N}{\mathbb N}
\newcommand{\de}{\, \mathrm{d}}
\newcommand{\del}{\partial}
\newcommand{\Vol}{\operatorname{Vol}}
\newcommand{\CC}{\mathcal{C}}
\newcommand{\CH}{\mathcal{H}}
\newcommand{\CK}{\mathcal{K}}
\newcommand{\CM}{\mathcal{M}}
\newcommand{\CW}{\mathcal{W}}
\newcommand{\CX}{\mathcal{X}}
\newcommand{\CY}{\mathcal{Y}}
\newcommand{\rd}{\mathrm d}
\newcommand {\RL}{\mathrm L}
\newcommand {\RA}{\mathrm A}
\newcommand{\RW}{\mathrm W}
\newcommand{\RC}{\mathrm C}
\newcommand{\abs}[1]{\left\lvert #1 \right\rvert}
\newcommand{\set}[1]{\left\{ #1 \right\}}
\newcommand{\norm}[1]{\left\| #1 \right\|}
\newcommand{\bo}\boldsymbol{}
\newcommand{\smallo}[2][]{o_{#1}\left( #2 \right)}
\newcommand{\bone}{\bo{1}}
\DeclareMathOperator{\dist}{dist}
\renewcommand{\le}{\leqslant}
\renewcommand{\ge}{\geqslant}
\newcommand{\eps}{\varepsilon}
\renewcommand{\phi}{\varphi}
\DeclareMathOperator{\Per}{Per}
\begin{document}

\title{Weyl's law for the Steklov problem on surfaces with rough boundary}

\author[Mikhail Karpukhin]{Mikhail Karpukhin}
\address{Mathematics 253-37, Caltech, Pasadena, CA 91125, USA
}
\email{mikhailk@caltech.edu}

\author[Jean Lagac\'e]{Jean Lagac\'e}

\address{Department of Mathematics, King's College London, The Strand, London,
WC2R 2LS, United Kingdom}

\email{jean.lagace@kcl.ac.uk}

\author[Iosif Polterovich]{Iosif Polterovich}

\address{D\'epartement de math\'ematiques et de statistique, Universit\'e de
  Montr\'eal, CP 6128, succ. Centre-ville, Montr\'eal, Qu\'ebec, H3C 3J7,
Canada}

\email{iossif@dms.umontreal.ca}

\begin{abstract}
The validity of Weyl's law for the Steklov problem on domains with Lipschitz boundaries is a well-known open question in spectral geometry.
We answer this question in two dimensions  and show that   Weyl's law holds for an  even  larger class of
surfaces  with rough boundaries. This class includes  domains with  interior cusps as well as  ``slow'' exterior cusps. Moreover, 
the condition on the speed of exterior cusps cannot be improved, which makes our result in a sense optimal. 
The  proof is based on the methods of Suslina and Agranovich 
combined with some
  observations about the boundary behaviour of conformal mappings.
\end{abstract}
\maketitle

\section{Introduction and main results}
\subsection{Asymptotics of Steklov eigenvalues}
Let $\Omega$ be a bounded domain in a smooth complete Riemannian manifold $(\CM,g)$ of dimension $d \ge 2$. Consider the Steklov eigenvalue problem
\begin{equation}
  \label{prob:Steklov}
  \begin{cases}
    \Delta u = 0 & \text{in } \Omega; \\
    \del_\nu u = \sigma u & \text{on } \del \Omega,
  \end{cases}
\end{equation}
where $\Delta$ is the Laplace-Beltrami operator on $\CM$ associated with the
Riemannian metric $g$ and $\del_\nu$ is the outward normal derivative. Under
some regularity
conditions on  the boundary, for instance, if $\del \Omega$ is Lipschitz
\cite{arendtmazzeo},  the spectrum is discrete and forms a sequence
accumulating only at infinity:
\begin{equation}
  0 = \sigma_0(\Omega) \le \sigma_1(\Omega) \le \sigma_2(\Omega) \le \dotso
  \nearrow \infty.
\end{equation}
We will discuss weaker conditions under which the spectrum is discrete later on. To study eigenvalue asymptotics, it is convenient to introduce the
eigenvalue counting function
\begin{equation}
  N(\sigma) := \# \set{j \in \N : \sigma_j(\Omega) < \sigma}.
\end{equation}
If  $\del \Omega$ is piecewise
$\RC^1$, it is known \cite{Agranovich}  that the counting function
satisfies the Weyl asymptotics
\begin{equation}
  \label{eq:weyllaw}
  N(\sigma) = \frac{\omega_{d-1}}{(2\pi)^{d-1}} \Vol_{d-1}(\del \Omega)\sigma^{d-1} + \smallo{\sigma^{d-1}},
\end{equation}
where $\omega_d$ is the volume of the $d$-dimensional unit ball.  We refer also to \cite{Sandgren, Shamma, Suslina1999, AgranovichAmosov} for earlier 
results on this topic, as well as to \cite{rozenbljumOriginal, Edward, GPPS, GKLP} for improvements of the error estimate under stronger regularity assumptions.
Extending the asymptotic formula \eqref{eq:weyllaw} to domains  with Lipschitz
boundaries is a well known open problem, see e.g. \cite{gpsurvey, RozenShar, Suslina2021,
GKLP}, to which we provide an answer in two dimensions.
\begin{theorem}
  \label{thm:main}
  Let $\Omega$ be a bounded domain with Lipschitz boundary in a smooth complete Riemannian manifold of dimension two. Then its Steklov eigenvalues satisfy the asymptotics \eqref{eq:weyllaw}.
\end{theorem}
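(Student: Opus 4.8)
The plan is to exploit the conformal invariance of the Dirichlet energy in dimension two in order to transport the problem from the rough domain $\Omega$ to a fixed smooth model, at the price of introducing a rough weight on the boundary. Since the metric $g$ is smooth, isothermal coordinates exhibit $g$ as $\rho^2$ times a flat metric near $\overline{\Omega}$, with $\rho$ smooth and bounded above and below; this reduces matters to a planar domain equipped with a harmless weight. A bounded Lipschitz domain is finitely connected and its boundary consists of finitely many Jordan curves, so by the Riemann mapping theorem (in the multiply connected case, Koebe's uniformization onto a circular domain) there is a conformal map $\Phi$ from the disk $\D$ onto the model of $\Omega$. Because harmonicity is preserved under $\Phi$ and the Dirichlet integral $\int_\Omega |\grad u|^2$ is a conformal invariant, the Steklov eigenvalues of $\Omega$ coincide with the eigenvalues of the weighted problem
\[
  \int_{\D} |\grad v|^2 \de A = \sigma \int_{\S^1} v^2\, w \de s, \qquad v = u\circ \Phi,
\]
where $w = |\Phi'|$ (times the smooth factor $\rho$) is the boundary weight. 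The essential point is that $w \in \RL^1(\S^1)$ with total mass $\int_{\S^1} w \de s = \Vol_1(\del\Omega) =: L$, which uses that $\del \Omega$ is rectifiable.

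In this model the Dirichlet-to-Neumann map of the disk is the classical first-order operator $\Lambda = \sqrt{-\del_\theta^2}$ on the circle, with spectrum $\{0,1,1,2,2,\dotsc\}$, and the task becomes the Weyl asymptotics of the weighted spectral problem $\Lambda v = \sigma\, w\, v$. I would aim to show
\[
  N(\sigma) = \frac{1}{\pi}\left(\int_{\S^1} w \de s\right)\sigma + \smallo{\sigma} = \frac{L}{\pi}\sigma + \smallo{\sigma},
\]
which is exactly \eqref{eq:weyllaw} for $d = 2$. For weights bounded above and below this is covered by the methods of Suslina and Agranovich, so the strategy is to approximate the genuinely rough $w$ by bounded truncations $w_n$. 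Two ingredients drive the argument: the monotonicity of the eigenvalues in the weight, which gives two-sided bracketing of the counting functions, and the $\RL^1$-continuity of the leading coefficient $\frac1\pi \int w$. The remaining quantity to control is the number of eigenvalues produced by the part of $\S^1$ where $w$ is large; since $\Lambda$ is first order, a Suslina-type remainder estimate bounds this contribution by a constant times $\sigma$ times the weight-mass of the bad set, which can be made arbitrarily small.

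The other half of the argument is to verify that the weight arising from a Lipschitz boundary genuinely lies in the class for which this scheme closes, and this is where the boundary behaviour of the conformal map enters. For a Lipschitz (indeed chord-arc) domain the image curve is Ahlfors regular, so $\Phi'$ belongs to a Hardy space $H^p$ with $p > 1$ and $\log|\Phi'| \in \mathrm{BMO}$, which endows $|\Phi'|$ with the $A_\infty$ structure needed to make the truncation error vanish in the limit. The same mechanism underlies the sharpness statement for exterior cusps: the admissible rate of blow-up of $w$ is governed by how badly $\Phi'$ may degenerate, and a cusp that is too sharp forces $w$ out of the class in which the leading asymptotics survive.

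The main obstacle is the weighted Weyl law with a merely $\RL^1$ weight. The classical asymptotic results presuppose a weight bounded away from zero and infinity, whereas the whole point of admitting cusps is that $w$ may blow up. The delicate step is therefore the uniform control, as $\sigma \to \infty$, of the eigenvalues concentrated where $w$ is large, together with a proof that the bracketing between the truncated and original problems is tight enough to yield a genuine $\smallo{\sigma}$ remainder rather than only $\bigo{\sigma}$. Checking the compatibility between the precise integrability of $|\Phi'|$ for a Lipschitz domain and the weight class tolerated by the Suslina--Agranovich machinery is where the interior/exterior cusp dichotomy and the optimality of the result emerge.
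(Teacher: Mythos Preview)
Your strategy coincides with the paper's: transplant via a conformal map to a circle domain, obtain a boundary-weighted Steklov problem, approximate the weight by smooth ones, and control the remainder by a Birman--Solomyak/Suslina-type estimate on the counting function of the difference. Two points on which the paper is more explicit than your sketch: (i) the isospectrality of the original and transplanted problems is not automatic for rough boundaries --- the paper establishes it by showing that $\phi$ induces an isomorphism between the boundary Sobolev spaces $\RW^{1,2}_\del(\Omega)$ and $\RW^{1,2}_\del(\Omega_\CC)$ (Proposition~\ref{prop:comp}), and remarks that this step had not been previously addressed in the Steklov literature even for Lipschitz domains; (ii) the ``weight-mass of the bad set'' governing the remainder is not the $\RL^1$ norm but, in the paper's sharp form, the $\RL\log\RL$ norm, via the bound $n(\lambda;K_\beta)\le C\lambda^{-1}\|\beta\|_{\RL\log\RL}$ (Lemma~\ref{lem:controlledcounting}, from Sukochev--Zanin). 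For Lipschitz boundary your observation that $|\Phi'|\in\RL^p$ for some $p>1$ (equivalently the $A_\infty$ structure) already suffices via the earlier Birman--Solomyak $\RL^p$ estimate, so Theorem~\ref{thm:main} does close along the lines you outline; the $\RL\log\RL$ refinement is what extends the result to slow outward cusps and produces the optimality statement you allude to at the end.
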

In fact, we prove that \eqref{eq:weyllaw} holds for domains satisfying weaker regularity
conditions defined via the boundary behaviour of conformal maps, see Section
\ref{sec:conformalregularity}. We give examples of domains satisfying those
conditions in Section~\ref{sec:examples}, they include the so-called chord-arc domains,  as well as
domains with  inward  and   ``slow'' outward cusps, see Proposition
\ref{prop:cusps}.

The proof of Theorem \ref{thm:main} relies on the variational characterisation
of Steklov eigenvalues.  While this characterisation is standard for Lipschitz domains, 
certain subtleties arise for domains with less regular boundary which we 
clarify in Section~\ref{sec:variational}. We use a conformal map to obtain an isospectral weighted
Steklov problem on a surface with smooth boundary.  The isospectrality follows
from the  equivalence of the corresponding variational characterisations;  for Lipschitz domains,
it can  be deduced almost directly from the analogous results for the Neumann problem \cite{gu}. Finally, we use the
methods developed in \cite{birmansolomjaksobolev,Suslina1999}, see also \cite{Agranovich},  in order to
obtain spectral asymptotics for these weighted Steklov problems.  
For Lipschitz domains we could use the results from \cite{Suslina1999,Agranovich} in a straightforward way; we extend these 
techniques  to allow for more singular weights corresponding to less regular boundaries.
\subsection{Conformal regularity}
\label{sec:conformalregularity}
As was mentioned above, our first goal  is to reduce the Steklov problem on a surface with rough boundary to
a weighted Steklov problem on a surface with smooth boundary, via a conformal map.
Slightly abusing terminology, we refer to domains in
two-dimensional Riemannian manifolds whose boundary is a finite collection of
disjoint closed simple curves as \emph{surfaces with boundary}. 
We say that two surfaces $\Omega_1$ and $\Omega_2$ with (potentially empty) boundary are
\emph{conformally equivalent}, or \emph{in the same conformal class} if there
exists $\phi : \Omega_1 \to \Omega_2$ a conformal diffeomorphism of their
interior extending to a homeomorphism of their boundary.  This defines an equivalence relation on surfaces with boundary, and it is clear that
every conformal class consists of surfaces with the same topological type, i.e.
same orientability, genus and number of boundary components.

The uniformisation theorems are concerned with finding a canonical representative
in every conformal class $\CC$. These canonical representatives are \emph{circle
domains}, which are the complement of $b$ geodesic disks in a closed surface
$M$
endowed with a metric of constant curvature. It follows from the uniformisation
theorems \cite{Haas1984,Maskit1989}
that there is a circle domain in every orientable conformal class with finite
topology; this  result was extended to the non-orientable setting in
\cite[pp. 11--12]{KarpukhinStern2021}. 
We shall denote this canonical representative $(\Omega_\CC,g_\CC)$.

Many boundary regularity results in the litterature are proven for conformal maps from the disk to simply
connected surfaces with boundary. It follows from \cite[p.24]{bellkrantz} that
any such result for maps from the disk is also valid for maps from an annulus
into a doubly connected surface with boundary,
by alternately filling the boundary components of the target with a disk and conjugating with
inversions. As observed in \cite[Remark 2.2]{KarpukhinStern2021}, this allows
one to extend the regularity theory to conformal maps from arbitrary circle domains with finite
topology. Indeed, in that situation the restriction of the conformal map to a
neighbourhood of one boundary component in the circle domain is a map from an
annulus into a doubly connected surface with boundary. In particular, 
Carathéodory's Theorem tells us that any conformal diffeomorphism of the
interiors $\phi :
\Omega_\CC \to \Omega$ extends to a
homeomorphism of the boundary \cite[Theorem 2.6]{Pommerenkebook}. 
\begin{defi}
    \label{def:conformal factors}
    Let $\CC$ be a conformal class and $\phi : \Omega_\CC \to \Omega$ be a
    conformal diffeomorphism. We define when they exist,
    \begin{equation}
        \beta := \abs{\rd \phi \big|_{\del \Omega_\CC}} \qquad \text{and} \qquad
        \eta := \abs{\rd \phi}^2.
    \end{equation}
    We call $\beta $ the \emph{boundary conformal factor} and $\eta$ the
    \emph{interior conformal factor}.
\end{defi}
The interior conformal factor $\eta \in \RL^1(\Omega_\CC)$ and the Riemannian
volume measure $\rd v_g$ on $\Omega$ is the pushforward measure $\phi_*(\eta \rd
v_{g_\CC})$. 
If a
surface with boundary has finite perimeter the boundary conformal
factor $\beta \in \RL^1(\del
\Omega_\CC)$, and the boundary length measure $\rd \ell_g$ on $\del \Omega$ is the
pushforward measure $\phi_*(\beta \rd \ell_{g_\CC})$ \cite[Theorem
6.8]{Pommerenkebook}.
Integrability properties of the conformal factors $\beta$ and $\eta$ are controlled
by the regularity of the boundary $\del \Omega$. This motivates the following
definition of regularity classes.
\begin{defi}
    \label{def:bcr}
    Let $\CC$ be a conformal class, $\Omega \in \CC$, and $\CX(\Omega_\CC)$ and $\CY(\del
    \Omega_\CC)$ be function spaces. We say
  that $\Omega$ has \emph{boundary conformal regularity } $\CY$ if for some conformal
  diffeomorphism $\phi : \Omega_\CC \to \Omega$, the boundary conformal factor
  $\beta \in \CY$. We say that $\Omega$ has \emph{interior conformal regularity}
  $\CX$ if for some conformal diffeomorphism the interior conformal factor $\eta
  \in \CX$. 
\end{defi}
We note that our definition of interior conformal regularity differs from that
in \cite{gu,gpu} by a factor of $2$ since in those papers it was stated in terms
of the integrability of
$\abs{\rd \phi}$ rather than $\abs{\rd \phi}^2$. In other words, interior
conformal regularity $\RL^p$ corresponds to domains which are $2p$-conformally
regular in their definitions.

The integrability class of the interior conformal factor 
$\Omega$ has been used to investigate the  properties of the Neumann Laplacian, see
\cite{gu,gpu}. As expected,  the regularity of 
the boundary conformal factor also appears in the study of the Steklov
problem. 
\begin{remark}
  \label{rem:exampleconformalreg}
  By the Kellogg--Warschawski Theorem \cite[Theorem 3.6]{Pommerenkebook},
  surfaces with boundary of class $\RC^{n,\alpha}$, $n \ge 1$ and $0 < \alpha <
  1$, have boundary conformal regularity $\RC^{n-1,\alpha}$. It follows furthermore from
  the arguments in the proof of \cite[Lemma 5.1]{baratchartbourgeoisleblond}
  that any surface with Lipschitz boundary (or, more generally, a chord--arc
  domain, see Section \ref{sec:examples})  has boundary conformal regularity $\RL^p$ for
  some $p > 1$, and surfaces of finite perimeter have boundary conformal regularity
  $\RL^1$.
\end{remark}

Note that domains with exterior cusps do not have boundary conformal regularity $\RL^p$
for any $p>1$. In order to include some of these domains in our analysis we need to recall the
following definition \cite[Sections IV.6, IV.8]{bennettsharpley}.

\begin{defi}
\label{logl}
Given $(\Xi,\mu)$ a measure space of finite measure and $a \ge 0$
the space $\RL (\log \RL)^a (\Xi)$  is a space of functions $f$
on $\Xi$ such that
$$\int_\Xi |f| (\log (2+|f|)^a \de \mu < \infty$$
endowed with the norm
$$
\|f\|_{\RL (\log \RL)^a (\Xi)}=\inf \left\{t>0 : \int_\Xi \abs{\frac{f}{t}}
\left(\log \left(2+\left|\frac{f}{t}\right|\right)\right)^a \de \mu \le
1\right\}.
$$
One can show that $\RL (\log \RL)^a(\Xi)$ is a Banach space for
every $a \ge 0$. The dual of $\RL (\log \RL)^a(\Xi)$ is the space $\exp
\RL^{1/a}(\Xi)$ of functions $f$ on $\Xi$ such that
\begin{equation}
    \label{eq:expl}
    \norm{f}_{\exp \RL^{1/a}(\Xi)} := \inf\set{t > 0 : \int_\Xi
    \exp\left(\abs{\frac{f}{t}}^{1/a} \de \mu \right) \le 1}
    < \infty.
\end{equation}
The norm \eqref{eq:expl} is equivalent to the dual norm on $\exp \RL^{1/a}$
so that there is $C$ depending only on $(\Xi,\mu)$ such that for all $f \in \RL
(\log \RL)^a(\Xi)$ and $u \in \exp \RL^{1/a}(\Xi)$ the H\"older-type inequality
\begin{equation}
\label{eq:Holder}
    \norm{fu}_{\RL^1(\Xi)} \le C \norm{f}_{\RL (\log \RL)^a(\Xi)} \norm u_{\exp
    \RL^{1/a}(\Xi)}
\end{equation}
holds.
\end{defi}
Given a conformal class $\CC$ and $\Omega \in \CC$ with boundary conformal factor
$\beta$ we consider the weighted Steklov problem
\begin{equation}
  \label{prob:weightedSteklov}
  \begin{cases}
    \Delta u = 0 & \text{in } \Omega_\CC; \\
    \del_\nu u = \beta \sigma u & \text{on } \del \Omega_\CC,
  \end{cases}
\end{equation}
The spectrum of this weighted problem is discrete and accumulates at infinity if the trace
$\RW^{1,2}(\Omega_\CC) \hookrightarrow \RL^2(\del \Omega_\CC, \beta \rd \ell_{g_\CC})$
is compact, see \cite[Sections 3 and 4]{GKL}. This is the case if $\beta
\in \RL \log \RL(\Omega_\CC)$, see Proposition \ref{prop:compacttrace}.

\begin{theorem}
    \label{thm:isospectral}
    Let $\CC$ be a conformal class and $(\Omega,g) \in \CC$ be a surface with
    boundary conformal regularity $\RL \log \RL$. Then, 
problems \eqref{prob:Steklov} and
  \eqref{prob:weightedSteklov} are \emph{isospectral} in the sense
  that $\sigma_k(\Omega) = \sigma_k(\Omega_\CC,\beta)$ for all $k \in \N$.  
\end{theorem}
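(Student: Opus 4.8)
The plan is to derive isospectrality from the variational characterisations of the two problems together with the pullback $\phi^\ast u := u \circ \phi$ along the conformal diffeomorphism $\phi : \Omega_\CC \to \Omega$. Following Section~\ref{sec:variational}, the Steklov eigenvalues of $\Omega$ are given by the min--max of the Rayleigh quotient $\int_\Omega \abs{\grad u}^2 \rd v_g \big/ \int_{\del \Omega} \abs{u|_{\del \Omega}}^2 \rd \ell_g$ over $(k+1)$-dimensional subspaces of the energy space, and likewise the eigenvalues $\sigma_k(\Omega_\CC,\beta)$ of \eqref{prob:weightedSteklov} are the min--max of $\int_{\Omega_\CC} \abs{\grad v}^2 \rd v_{g_\CC} \big/ \int_{\del \Omega_\CC} \abs{v|_{\del \Omega_\CC}}^2 \beta \rd \ell_{g_\CC}$. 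The latter is a genuine eigenvalue problem with discrete spectrum because the hypothesis $\beta \in \RL \log \RL$ makes the weighted trace $\RW^{1,2}(\Omega_\CC) \hookrightarrow \RL^2(\del \Omega_\CC, \beta \rd \ell_{g_\CC})$ compact, by Proposition~\ref{prop:compacttrace}.

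The two Rayleigh quotients coincide under $\phi^\ast$. Since in dimension two the Dirichlet energy is a conformal invariant, $\int_{\Omega_\CC} \abs{\grad (u \circ \phi)}^2 \rd v_{g_\CC} = \int_\Omega \abs{\grad u}^2 \rd v_g$, so the numerators agree. For the denominators I would use the identity $\rd \ell_g = \phi_\ast(\beta \rd \ell_{g_\CC})$ recorded after Definition~\ref{def:conformal factors}: a change of variables yields $\int_{\del \Omega} \abs{u|_{\del \Omega}}^2 \rd \ell_g = \int_{\del \Omega_\CC} \abs{(u \circ \phi)|_{\del \Omega_\CC}}^2 \beta \rd \ell_{g_\CC}$, provided the boundary trace intertwines with the pullback, i.e. $(u \circ \phi)|_{\del \Omega_\CC} = (u|_{\del \Omega}) \circ \phi$ almost everywhere on $\del \Omega_\CC$. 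Granting this, every $u$ satisfies $R_\Omega[u] = R_{\Omega_\CC, \beta}[\phi^\ast u]$.

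It then remains to verify that $\phi^\ast$ is a linear isomorphism of the two energy spaces intertwining the form pairs. Because $\phi^\ast$ preserves the Dirichlet energy and, by the computation above, the boundary mass, it is an isometry for the Steklov form norm — the Dirichlet energy plus the boundary $\RL^2$ norm (weighted by $\beta$ on $\Omega_\CC$) — and hence extends to an isometric isomorphism of the corresponding completions, with inverse $(\phi^{-1})^\ast$. Within the variational framework of Section~\ref{sec:variational} this identifies the two energy spaces; being a linear bijection, $\phi^\ast$ sends $(k+1)$-dimensional subspaces to $(k+1)$-dimensional subspaces, so the two min--max families agree term by term and $\sigma_k(\Omega) = \sigma_k(\Omega_\CC,\beta)$ for all $k$.

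The main obstacle is the boundary step: justifying the trace intertwining and the boundary change of variables when $\phi$ is only a homeomorphism of $\del \Omega_\CC$ onto $\del \Omega$, with no a priori Lipschitz control. I would first establish both identities for $u$ smooth up to the boundary, where the pushforward formula $\rd \ell_g = \phi_\ast(\beta \rd \ell_{g_\CC})$ applies directly, and then pass to the limit along approximations in the energy space. This is exactly where the $\RL \log \RL$ hypothesis is essential: by the H\"older-type inequality \eqref{eq:Holder} the weighted boundary integral $\int_{\del \Omega_\CC} \abs{\cdot}^2 \beta \rd \ell_{g_\CC}$ is continuous on the energy space, so the boundary $\RL^2$ norms pass to the limit and the identity survives the approximation. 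Care must also be taken, as flagged in Section~\ref{sec:variational}, that for rough $\del \Omega$ the energy space is the correct one (in particular that smooth functions are dense), so that the isometry argument genuinely identifies the two form domains rather than proper subspaces.
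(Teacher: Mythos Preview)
Your approach is essentially the paper's: transport Rayleigh quotients by $\phi^*$, show $\phi^*$ is an isomorphism of the relevant energy spaces, and conclude equality of the min--max values. The paper packages the isomorphism as Proposition~\ref{prop:comp}, proving directly that $\phi^* : \RW^{1,2}_\del(\Omega) \to \RW^{1,2}_\del(\Omega_\CC)$ is bounded with bounded inverse; the nontrivial direction (boundedness of $\phi^*$ into the \emph{unweighted} space) uses the Poincar\'e trace inequality $\inf_{a}\norm{v-a}_{\exp\RL^2(\del\Omega_\CC)}\le C\norm{\nabla v}_{\RL^2(\Omega_\CC)}$ together with the $\RL\log\RL$--$\exp\RL$ duality to control $\norm{\phi^*f}_{\RL^2(\del\Omega_\CC)}$.

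Your isometry argument is cleaner in spirit but sidesteps exactly this point, and that is where the gap lies. Your $\phi^*$ lands isometrically in the completion of smooth functions under the $\beta$-\emph{weighted} norm $\norm{\nabla v}_{\RL^2(\Omega_\CC)}^2 + \int_{\del\Omega_\CC} v^2\beta\,\rd\ell$, whereas the variational characterisation of $\sigma_k(\Omega_\CC,\beta)$ in Section~\ref{sec:variational} is over $\RW^{1,2}_\del(\Omega_\CC)$ with the \emph{unweighted} boundary norm. That these two completions coincide is precisely what remains to be shown, and it is essentially equivalent to Proposition~\ref{prop:comp}: one inequality is Proposition~\ref{prop:compacttrace}, but the reverse (unweighted controlled by weighted) requires either a compactness argument on $\Omega_\CC$ using $\int\beta>0$, or the very Poincar\'e trace inequality the paper invokes. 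So the density/form-domain concern you flag in your last paragraph is real, but it lives on the $\Omega_\CC$ side (where $\beta$ may be small or vanish), not on the rough-$\del\Omega$ side.
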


\begin{remark}
    \label{rem:isospectral}

A simple computation shows that Theorem \ref{thm:isospectral} holds when the
 boundary is smooth, see e.g. \cite[Lemma 3.3]{JolSharaf}.
If the boundary is sufficiently rough, isospectrality  
is not a priori  clear,  and is resolved through the study of composition
    operators between Sobolev spaces with appropriately chosen norm.  
In a similar way, the existence of those bounded composition operators gives
    rise to isospectrality of  the weighted Neumann problems, see  \cite{gu}.
This issue was not previously addressed in the literature on the Steklov problem on Lipschitz domains, 
cf. \cite[Proposition 2.1.4]{gp2}.  

\end{remark}

Our main technical theorem is the following.

\begin{theorem} 
  \label{thm:maingeneral}
  Let $\Omega$ be a surface with boundary of boundary conformal regularity $\RL \log \RL$.
  Then its Steklov eigenvalues satisfy the asymptotic formula  \eqref{eq:weyllaw}. 

  Equivalently, for any surface $\Omega$ with smooth boundary and $\beta \in \RL \log
  \RL(\del \Omega)$, the eigenvalues of the weighted problem
  \eqref{prob:weightedSteklov} satisfy the asymptotic formula \eqref{eq:weyllaw}, with
  $\Vol_{d-1}(\del \Omega)$ replaced by $\int_{\del \Omega} \beta \de \ell_g$. 
\end{theorem}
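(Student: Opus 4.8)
The plan is to prove the equivalent weighted statement and then transfer it through Theorem~\ref{thm:isospectral}. By that theorem it suffices to treat the weighted problem \eqref{prob:weightedSteklov} on the \emph{smooth} surface $(\Omega_\CC,g_\CC)$ with weight $\beta\in\RL\log\RL(\del\Omega_\CC)$. Let $\CD$ denote the Dirichlet-to-Neumann operator of $(\Omega_\CC,g_\CC)$. The variational characterisation, together with the compactness of the trace $\RW^{1,2}(\Omega_\CC)\hookrightarrow\RL^2(\del\Omega_\CC,\beta\de\ell_{g_\CC})$ from Proposition~\ref{prop:compacttrace}, identifies the counting function as the negative inertia
\begin{equation}
  N(\sigma;\beta):=n_-\!\left(\CD-\sigma\beta\right),
\end{equation}
that is, the maximal dimension of a subspace of $\RH^{1/2}(\del\Omega_\CC)$ on which $\langle\CD f,f\rangle<\sigma\int_{\del\Omega_\CC}\beta f^2\de\ell_{g_\CC}$. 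Since $\del\Omega_\CC$ is smooth, for a \emph{bounded} weight the asymptotics $N(\sigma;\beta)=\tfrac{\omega_1}{2\pi}\bigl(\int_{\del\Omega_\CC}\beta\de\ell_{g_\CC}\bigr)\sigma+\smallo\sigma$, i.e.\ the weighted form of \eqref{eq:weyllaw}, are already available from \cite{Suslina1999,Agranovich}; I take this as the baseline.

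For the lower bound I would use monotonicity of $N(\cdot\,;\beta)$ in the weight. Truncating $\beta_M:=\min(\beta,M)$, the pointwise bound $\beta\ge\beta_M$ gives $N(\sigma;\beta)\ge N(\sigma;\beta_M)$, so the baseline applied to the bounded weight $\beta_M$ yields $\liminf_{\sigma\to\infty}\sigma^{-1}N(\sigma;\beta)\ge\tfrac{\omega_1}{2\pi}\int_{\del\Omega_\CC}\beta_M\de\ell_{g_\CC}$. Letting $M\to\infty$ and using monotone convergence recovers the full coefficient $\tfrac{\omega_1}{2\pi}\int_{\del\Omega_\CC}\beta\de\ell_{g_\CC}$.

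For the upper bound I would decompose $\beta=\beta_M+r_M$ with $r_M:=(\beta-M)_+$ and exploit the subadditivity $n_-(A+B)\le n_-(A)+n_-(B)$ of negative-inertia counts. Splitting $\CD=t\CD+(1-t)\CD$ for $t\in(0,1)$ gives
\begin{equation}
  N(\sigma;\beta)\le N(\sigma/t;\beta_M)+N\!\left(\sigma/(1-t);r_M\right).
\end{equation}
The first term is controlled by the baseline. Everything then reduces to the remainder estimate
\begin{equation}
  \limsup_{\sigma\to\infty}\frac{N(\sigma;r_M)}{\sigma}\le C\norm{r_M}_{\RL\log\RL(\del\Omega_\CC)},
\end{equation}
with $C$ depending only on $(\Omega_\CC,g_\CC)$. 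Granting this, I would let $M\to\infty$ — so that $\int\beta_M\to\int\beta$ and, since $\RL\log\RL$ has absolutely continuous norm, $\norm{r_M}_{\RL\log\RL}\to0$ — and then $t\to1$, matching the lower bound and finishing the proof.

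The hard part is this remainder estimate, which is a borderline Birman--Solomjak bound. As $\del\Omega_\CC$ is a finite disjoint union of smooth circles, $\CD$ differs from the model operator $\CD_0:=\sqrt{-\Delta_{\del\Omega_\CC}}$ by a bounded (order-zero) operator; one more application of subadditivity absorbs this into an $\bigo1$ error, so it suffices to bound $n_-(\CD_0-\sigma r)$. By the Birman--Schwinger principle this equals the number of eigenvalues exceeding $\sigma^{-1}$ of the compact operator $\CD_0^{-1/2}r\,\CD_0^{-1/2}$ (acting off the constants), an operator of \emph{critical} order $-1$ on the one-dimensional manifold $\del\Omega_\CC$. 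I would estimate its eigenvalue count by Birman--Solomjak's dyadic, piecewise-polynomial method: after a frequency localisation, the form $\int r f^2$ on each dyadic block is bounded through the H\"older-type inequality \eqref{eq:Holder} paired with the Moser--Trudinger embedding $\RH^{1/2}\hookrightarrow\exp\RL^2$ (so that $f^2\in\exp\RL$, the predual of $\RL\log\RL$), and summation over the blocks produces precisely the $\RL\log\RL$ norm of $r$. This is exactly the critical case in which $\CD_0^{-1/2}r\,\CD_0^{-1/2}$ just fails to be trace class — its trace diverges logarithmically — so that no $\RL^p$ norm with $p>1$ and no naive trace bound will do, and the fine Orlicz structure of $\RL\log\RL$ is simultaneously necessary and sufficient. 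Carrying out this borderline analysis, which extends the $\RL^p$ estimates of \cite{Suslina1999,Agranovich,birmansolomjaksobolev}, is where the substance of the argument lies.
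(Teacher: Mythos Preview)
Your approach is essentially the paper's: approximate $\beta$ by a regular weight where the Weyl law is known, and control the remainder via a counting bound of the form $n(\lambda;r)\le C\lambda^{-1}\norm{r}_{\RL\log\RL}$ (your truncation/subadditivity split plays the same role as the paper's smoothing/Weyl--Fan Ky packaging in Lemma~\ref{lem:countingproperties}). The one substantive difference is that the paper does not derive this critical remainder bound by the Birman--Solomjak/Moser--Trudinger route you sketch, but simply quotes it from Sukochev--Zanin as Lemma~\ref{lem:controlledcounting}.
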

The equivalence of the two formulations follows from Theorem \ref{thm:isospectral}.
Note that in view of  Remarks \ref{rem:exampleconformalreg}, Theorems
\ref{thm:isospectral} and 
\ref{thm:maingeneral} imply Theorem \ref{thm:main}.

\subsection{Variational characterisation and natural domains for the Steklov
problem}

\label{sec:variational}

On a surface with boundary $\Omega$, consider the Sobolev space
\begin{equation}
    \RW^{1,2}(\Omega) := \set{f  \in \RL^2(\Omega) : \abs{\nabla f} \in
    \RL^2(\Omega)},
\end{equation}
where $\nabla f$ is the weak gradient. If $\Omega$ is a surface with Lipschitz boundary, there are two equivalent norms
on $\RW^{1,2}(\Omega)$:
\begin{equation}
    \norm{f}^2_{\RW^{1,2}(\Omega)} = \int_\Omega \abs{\nabla f}^2 \de v_g +
    \int_\Omega f^2 \de v_g,
    \label{eq:interiornorm}
\end{equation}
and 
\begin{equation}
    \label{eq:bdrynorm}
    \norm{f}^2_{\RW^{1,2}_\del(\Omega)} = \int_\Omega \abs{\nabla f}^2 \de v_g +
    \int_{\del \Omega} f^2 \de \ell_g.
\end{equation}
The norm \eqref{eq:interiornorm} is the standard one and is commonly used in interior problems, for
instance the Neumann problem. On the other hand the norm \eqref{eq:bdrynorm} is
a natural
norm of choice for the Steklov problem. When the boundary is only some
collection of Jordan curves these norms may not be equivalent, even when the
boundary has finite perimeter (one can show that this the case for domains with
fast cusps as defined in subsection \ref{subsec:cusps}). By the Meyers--Serrin
theorem, for any surface with boundary $\Omega$ the space
$\RW^{1,2}(\Omega)$ is the completion of
\begin{equation}
    \label{eq:denseinside}
    \CW(\Omega) := \set{f \in \RC^\infty(\Omega) : \norm{f}_{\RW^{1,2}(\Omega)} < \infty}
\end{equation}
under the $\norm \cdot_{\RW^{1,2}(\Omega)}$ norm, which motivates the following
definition.
\begin{defi}
    \label{def:bdrysobolev}
    Let $\Omega$ be a surface with boundary. The boundary Sobolev space
    $\RW^{1,2}_\del(\Omega)$ is defined as the completion of
    \begin{equation}
        \CW_\del(\Omega) :=    \set{f : \overline \Omega \to \R : f \in
            \RC^\infty(\Omega) \text{ and } \norm{f}_{\RW^{1,2}_\del(\Omega)} <
        \infty}
    \end{equation}
    under the $\norm \cdot_{\RW^{1,2}_\del(\Omega)}$ norm.
\end{defi}
Again, for surfaces with sufficiently regular boundary, $\RW^{1,2}(\Omega)$ and
$\RW^{1,2}_\del(\Omega)$ are isomorphic. We give the following condition for
their equivalence in terms of interior and boundary conformal regularity.
\begin{prop}
    \label{prop:isom}
    Let $\Omega$ be a surface with boundary with both interior \emph{and}
    boundary conformal regularity $\RL \log \RL$. Then, $\RW^{1,2}(\Omega)$ and
    $\RW_\del^{1,2}(\Omega)$ are isomorphic.
\end{prop}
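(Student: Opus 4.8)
The plan is to transport both norms to the canonical circle domain $\Omega_\CC$ by means of the conformal diffeomorphism $\phi : \Omega_\CC \to \Omega$, and there to exploit the fact that in dimension two the Dirichlet energy is conformally invariant. Writing $\tilde f = f \circ \phi$ and using that $\rd v_g = \phi_*(\eta \rd v_{g_\CC})$ and $\rd \ell_g = \phi_*(\beta \rd \ell_{g_\CC})$, the three integrals defining the norms transform so that $\norm{f}^2_{\RW^{1,2}(\Omega)} = A(\tilde f)$ and $\norm{f}^2_{\RW^{1,2}_\del(\Omega)} = B(\tilde f)$, where
\begin{equation}
  A(\tilde f) := \int_{\Omega_\CC} \abs{\nabla \tilde f}^2 \de v_{g_\CC} + \int_{\Omega_\CC} \tilde f^2 \eta \de v_{g_\CC}, \qquad B(\tilde f) := \int_{\Omega_\CC} \abs{\nabla \tilde f}^2 \de v_{g_\CC} + \int_{\del\Omega_\CC} \tilde f^2 \beta \de \ell_{g_\CC}.
\end{equation}
Since $f \mapsto f \circ \phi$ is a bijection of $\RC^\infty(\Omega)$ onto $\RC^\infty(\Omega_\CC)$ carrying one norm onto the other, it suffices to show that $A$ and $B$ are equivalent on $\RC^\infty(\Omega_\CC)$; the claimed isomorphism of the completions then follows, as the two norms are then finite on the same smooth functions and the identity extends to an isomorphism.

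Both $A$ and $B$ share the same Dirichlet term, so the whole matter reduces to controlling each zeroth-order term by the other norm. The key analytic input is that $\Omega_\CC$ has \emph{smooth} boundary, so we may use the borderline Moser--Trudinger embedding $\RW^{1,2}(\Omega_\CC) \hookrightarrow \exp\RL^2(\Omega_\CC)$ and, through the trace into $\RH^{1/2}(\del\Omega_\CC)$, its fractional analogue $\RW^{1,2}(\Omega_\CC) \to \exp\RL^2(\del\Omega_\CC)$. Hence $\tilde f^2$ lies in $\exp\RL$ both on $\Omega_\CC$ and on $\del\Omega_\CC$, with the respective $\exp\RL$ norms bounded by $\norm{\tilde f}_{\RW^{1,2}}^2$. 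Pairing these against the hypotheses $\eta \in \RL\log\RL(\Omega_\CC)$ and $\beta \in \RL\log\RL(\del\Omega_\CC)$ via the H\"older-type inequality \eqref{eq:Holder} with $a = 1$ yields
\begin{equation}
  \int_{\Omega_\CC} \tilde f^2 \eta \le C \norm{\eta}_{\RL\log\RL(\Omega_\CC)} \norm{\tilde f}_{\RW^{1,2}}^2, \qquad \int_{\del\Omega_\CC} \tilde f^2 \beta \le C \norm{\beta}_{\RL\log\RL(\del\Omega_\CC)} \norm{\tilde f}_{\RW^{1,2}}^2.
\end{equation}

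To upgrade these into a genuine comparison of $A$ and $B$ I would subtract a suitable average before applying the estimates. For $A \le CB$, set $c = \left(\int_{\del\Omega_\CC} \beta\right)^{-1} \int_{\del\Omega_\CC} \tilde f \beta$, the $\beta$-weighted boundary mean, so that $c^2 \int_{\del\Omega_\CC} \beta \le \int_{\del\Omega_\CC} \tilde f^2 \beta$ by Jensen, and write $\tilde f = w + c$. The functional $\tilde f \mapsto c$ is bounded on $\RW^{1,2}(\Omega_\CC)$ (again by the trace-into-$\exp\RL$ estimate paired with $\beta \in \RL\log\RL$) and equals $1$ on constants, so the generalised Poincar\'e inequality gives $\norm w_{\RW^{1,2}} \le C \norm{\nabla w}_{\RL^2} = C\norm{\nabla \tilde f}_{\RL^2}$, since $w$ has vanishing $\beta$-weighted boundary mean. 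Combining with $\tilde f^2 \le 2w^2 + 2c^2$ and the finiteness of $\int_{\Omega_\CC} \eta$ gives $\int_{\Omega_\CC} \tilde f^2 \eta \le C\big(\norm{\nabla \tilde f}_{\RL^2}^2 + \int_{\del\Omega_\CC} \tilde f^2 \beta\big) = C\,B(\tilde f)$, hence $A \le C\,B$. The reverse inequality $B \le C\,A$ is obtained by the symmetric argument, anchoring instead at the $\eta$-weighted interior mean and using the interior Moser--Trudinger estimate; note that each direction uses \emph{both} regularity hypotheses.

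I expect the main obstacle to be conceptual rather than computational: one must identify the correct borderline function spaces and verify that $\RL\log\RL$ is exactly dual to the Moser--Trudinger space $\exp\RL$, which is precisely why squares of $\RW^{1,2}$ functions can be integrated against weights of class $\RL\log\RL$ but not against less regular weights. This duality is the crux, and it explains why $\RL\log\RL$ is the sharp threshold in the statement. The remaining ingredients---boundedness of the averaging functionals and the attendant generalised Poincar\'e inequalities on the smooth domain $\Omega_\CC$---are standard compactness arguments and present no essential difficulty.
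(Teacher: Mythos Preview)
Your argument is correct and rests on the same analytic ingredients as the paper---the Moser--Trudinger/trace embedding $\RW^{1,2}(\Omega_\CC) \hookrightarrow \exp\RL^2$, the duality \eqref{eq:Holder} with $\RL\log\RL$, and a Poincar\'e inequality on the smooth circle domain---so the two proofs are essentially the same in content. The difference is organisational: the paper first packages these estimates into two separate composition-operator results (Proposition~\ref{prop:comp} for $\RW^{1,2}_\del$ using boundary regularity, and an analogous lemma for $\RW^{1,2}$ using interior regularity), each an isomorphism onto the corresponding space on $\Omega_\CC$, and then simply composes them with the trivial identification $\RW^{1,2}(\Omega_\CC)\cong\RW^{1,2}_\del(\Omega_\CC)$. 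Your direct comparison of the pulled-back norms $A$ and $B$ is more self-contained for this particular proposition, but the paper's modular route has the payoff that the boundary composition isomorphism is exactly what is needed for the isospectrality Theorem~\ref{thm:isospectral}, so no work is duplicated. One small point to tighten: after showing $A\sim B$ on smooth functions you conclude that the completions coincide, which is fine, but you should note that this also forces $\CW(\Omega)$ and $\CW_\del(\Omega)$ (restricted to the interior) to agree as sets, so there is no ambiguity about which dense subspace is being completed.
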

The appropriate space to define the
Steklov problem (especially when they are not isomorphic) is $\RW^{1,2}_\del(\Omega)$, see \cite{nazarovtaskinen}.
The Steklov eigenvalues $\sigma_k(\Omega)$ satisfy the variational characterisation
\begin{equation}
    \sigma_k(\Omega) = \inf_{E_k} \sup_{u \in E_k \setminus \set 0}
    \frac{\int_\Omega \abs{\nabla u}^2 \de v_g}{\int_{\del \Omega} u^2 \de
    \ell_g},
\end{equation}
where $E_k$ is a $k+1$ dimensional subspace of $\RW_\del^{1,2}(\Omega)$. For the
weighted problem on $\Omega_\CC$, we have that for $\beta \in \RL \log
\RL(\Omega_\CC)$ the weighted Steklov eigenvalues satisfy the characterisation
\begin{equation}
    \sigma_k(\Omega_\CC,\beta) = \inf_{E_k} \sup_{u \in E_k \setminus \set 0}
    \frac{\int_{\Omega_\CC} \abs{\nabla u}^2 \de v_g}{\int_{\del \Omega_\CC} u^2 \beta \de
    \ell_g},
\end{equation}
where again $E_k$ is a $k+1$ dimensional subspace of
$\RW_\del^{1,2}(\Omega_\CC)$. The isospectrality Theorem \ref{thm:isospectral}
is a consequence of the following result on composition operator between Sobolev
spaces.
\begin{prop}
    \label{prop:comp}
    Let $\CC$ be a conformal class and $(\Omega,g) \in \CC$ be a surface with
    boundary. Let $\phi : \Omega_\CC \to \Omega$ be a conformal diffeomorphism
    with boundary conformal factor $\beta \in \RL \log \RL$. Then, the composition
    operator
    \begin{equation}
        \phi^* : \RW^{1,2}_\del(\Omega) \to \RW^{1,2}_\del(\Omega_\CC) \qquad
        \phi^* f := f \circ \phi
    \end{equation}
    induced by $\phi$ is an isomorphism.
\end{prop}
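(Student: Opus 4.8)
The plan is to reduce the claim to the equivalence of two norms on the smooth circle domain $\Omega_\CC$, to prove the nontrivial direction of that equivalence using the $\RL \log \RL$ hypothesis together with a Moser--Trudinger-type trace embedding, and to obtain the reverse direction by a soft compactness argument.

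First I would use the conformal invariance of the Dirichlet energy in dimension two. For $f \in \CW_\del(\Omega)$ the function $u := \phi^* f$ is smooth on the interior of $\Omega_\CC$, and since $\phi$ is a conformal diffeomorphism of the interiors we have $\int_\Omega \abs{\nabla f}^2 \de v_g = \int_{\Omega_\CC} \abs{\nabla u}^2 \de v_{g_\CC}$. Because $\de \ell_g$ is the pushforward $\phi_*(\beta \de \ell_{g_\CC})$, the boundary change of variables gives $\int_{\del \Omega} f^2 \de \ell_g = \int_{\del \Omega_\CC} u^2 \beta \de \ell_{g_\CC}$. Thus $f \mapsto \phi^* f$ is an isometric bijection between the smooth classes carrying $\norm{\cdot}_{\RW^{1,2}_\del(\Omega)}$ to the weighted norm $\norm{u}_\beta^2 := \int_{\Omega_\CC} \abs{\nabla u}^2 \de v_{g_\CC} + \int_{\del \Omega_\CC} u^2 \beta \de \ell_{g_\CC}$. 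Since $\Omega_\CC$ has smooth boundary it then suffices to show that $\norm{\cdot}_\beta$ and $\norm{\cdot}_{\RW^{1,2}_\del(\Omega_\CC)}$ are equivalent on $\CW_\del(\Omega_\CC)$: the induced completions coincide and $\phi^*$ becomes the asserted isomorphism, with inverse induced by $\phi^{-1}$.

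The upper bound $\norm{u}_\beta \le C \norm{u}_{\RW^{1,2}_\del(\Omega_\CC)}$ is the main step and the only place the regularity hypothesis enters. I would bound the weighted boundary term using the Hölder-type inequality \eqref{eq:Holder} with $a = 1$, namely $\int_{\del \Omega_\CC} u^2 \beta \de \ell_{g_\CC} \le C \norm{\beta}_{\RL \log \RL(\del \Omega_\CC)} \norm{u^2}_{\exp \RL^1(\del \Omega_\CC)}$. Since $\norm{u^2}_{\exp \RL^1} \le \norm{u}_{\exp \RL^2}^2$, it remains to control the trace of $u$ in $\exp \RL^2(\del \Omega_\CC)$. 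This is provided by a trace embedding of Moser--Trudinger type: on the smooth domain $\Omega_\CC$ the trace sends $\RW^{1,2}(\Omega_\CC)$ into $\RH^{1/2}(\del \Omega_\CC)$, which embeds continuously into $\exp \RL^2$ in one dimension. Using that the interior norm and $\norm{\cdot}_{\RW^{1,2}_\del(\Omega_\CC)}$ are equivalent on the smooth domain, this yields $\int_{\del \Omega_\CC} u^2 \beta \de \ell_{g_\CC} \le C \norm{\beta}_{\RL \log \RL} \norm{u}_{\RW^{1,2}_\del(\Omega_\CC)}^2$, and the upper bound follows. Here $\RL \log \RL$ is exactly the class that pairs, through \eqref{eq:Holder}, with the exponential-square integrability of $\RW^{1,2}$-traces, which is why it is the natural hypothesis.

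For the reverse bound $\norm{u}_{\RW^{1,2}_\del(\Omega_\CC)} \le C \norm{u}_\beta$ the weight enters only through $\int_{\del \Omega_\CC} \beta \de \ell_{g_\CC} = \ell_g(\del \Omega) > 0$, and I would argue by contradiction. If no such $C$ existed there would be $u_n \in \CW_\del(\Omega_\CC)$ with $\int_{\del \Omega_\CC} u_n^2 \de \ell_{g_\CC} = 1$ while $\int_{\Omega_\CC} \abs{\nabla u_n}^2 \de v_{g_\CC} \to 0$ and $\int_{\del \Omega_\CC} u_n^2 \beta \de \ell_{g_\CC} \to 0$. The sequence is bounded in $\RW^{1,2}(\Omega_\CC)$, so a subsequence converges weakly to some $u$; the vanishing of the Dirichlet energy forces $u$ to be a constant $c$, and the compactness of the trace on the smooth domain gives $u_n \to c$ in $\RL^2(\del \Omega_\CC)$, so that $c^2 \ell_{g_\CC}(\del \Omega_\CC) = 1$ and $c \neq 0$. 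Passing to an a.e.\ convergent subsequence and applying Fatou's lemma to $u_n^2 \beta \ge 0$ then gives $c^2 \int_{\del \Omega_\CC} \beta \de \ell_{g_\CC} \le \liminf_n \int_{\del \Omega_\CC} u_n^2 \beta \de \ell_{g_\CC} = 0$, contradicting $c \neq 0$. This establishes the equivalence of norms, and hence the proposition. The main obstacle is the upper bound, where the integrability of $\beta$ must be matched precisely against the exponential integrability of boundary traces of $\RW^{1,2}$ functions; the reverse bound is comparatively soft.
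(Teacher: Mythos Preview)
Your proof is correct. The reformulation as an equivalence between $\norm{\cdot}_\beta$ and $\norm{\cdot}_{\RW^{1,2}_\del(\Omega_\CC)}$ on the smooth circle domain is sound, and both inequalities go through.

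Your upper bound $\norm{u}_\beta \le C\norm{u}_{\RW^{1,2}_\del(\Omega_\CC)}$ coincides with the paper's proof of the boundedness of $(\phi^{-1})^*$: both rely on the trace embedding $\RW^{1,2}(\Omega_\CC)\to\exp\RL^2(\del\Omega_\CC)$ paired against $\beta\in\RL\log\RL$ via \eqref{eq:Holder}, which is exactly the boundedness part of Proposition~\ref{prop:compacttrace}. The genuine difference lies in the reverse inequality (the paper's boundedness of $\phi^*$). The paper proceeds constructively: it invokes the Poincar\'e trace inequality $\inf_a\norm{\phi^*f-a}_{\exp\RL^2(\del\Omega_\CC)}\le C\norm{\nabla(\phi^*f)}_{\RL^2(\Omega_\CC)}$ from \cite{cianchipick}, then uses the composition bound $\exp\RL^2(\del\Omega_\CC)\to\RL^2(\del\Omega)$ (which again consumes the $\RL\log\RL$ hypothesis) to control the constant. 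Your compactness--Fatou argument is softer and more elementary, and in fact uses only that $0\not\equiv\beta\in\RL^1$ for this direction; the price is that it is nonconstructive. Note that your labeling of the upper bound as the ``main step'' is therefore inverted relative to the paper's perspective.

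One small point worth making explicit: to pass from norm equivalence on $\CW_\del(\Omega_\CC)$ to the claimed isomorphism you implicitly use $\phi^*(\CW_\del(\Omega))\subset\CW_\del(\Omega_\CC)$, i.e.\ that a function smooth on the interior of the smooth bounded domain $\Omega_\CC$ with $\nabla u\in\RL^2$ automatically lies in $\RW^{1,2}(\Omega_\CC)$. This is standard (Deny--Lions), and the paper tacitly uses the same fact when it applies the Poincar\'e trace inequality to $\phi^*f$.
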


\subsection*{Plan of the paper}
The paper is organised as follows.
Section \ref{sec:isospectrality} is concerned with the proof of Theorem
\ref{thm:isospectral}. We then use the variational isospectrality to prove that
domains of boundary conformal regularity $\RL \log \RL$ have discrete Steklov
spectrum. Section \ref{sec:asymptotics} is dedicated to proving
Theorem \ref{thm:maingeneral}, expanding on the theory of spectral asymptotics
for variational eigenvalues developed in \cite{birmansolomjaksobolev,Suslina1999}.
In Section \ref{sec:examples} we give a few examples of domains satisfying the
hypotheses of Theorem \ref{thm:maingeneral}. Finally,  in Section
\ref{sec:remarks} we discuss some further extensions and applications of our methods, in particular to the Steklov problem with an indefinite weight and to the Neumann eigenvalue problem.

\subsection*{Acknowledgements}
The authors would like to thank G. Rozenblum for  helpful comments and proposed simplifications of the 
proofs of Proposition~\ref{prop:compacttrace} and Lemma \ref{lem:controlledcounting}.
We are also grateful to   A.~Ukhlov for useful discussions. The research of M.K. was partially supported by NSF grant DMS-2104254.
The research of J.L. was partially supported by EPSRC award EP/T030577/1. He
also thanks  the University of Bristol for its hospitality while this paper was
written. The research of I.P. was partially supported by NSERC and FRQNT.

\section{Isospectrality and composition operators}

\label{sec:isospectrality}

We first prove the following lemma about composition operators on some
Orlicz spaces, in similar fashion to \cite[Theorem 4]{gu} which
is stated for Lebesgue spaces.

\begin{lemma}
    For $j \in \set{1,2}$, let $(\Xi_j,\mu_j)$ be measure spaces with finite
    measure,  
    $\phi : \Xi_1 \to \Xi_2$ be measurable and suppose that the pushforward measure
    $\phi_*(\mu_1) = \beta \mu_2$, where $\beta : \Xi_2 \to (0,\infty)$. Then,
    $\phi$ induces a bounded composition operator
    \begin{equation}
        \phi^* : \exp \RL^2(\Xi_2) \to \RL^2(\Xi_1), \qquad \phi^* f := f \circ
        \phi
    \end{equation}
    if and only if $\beta \in \RL \log \RL(\Xi_2)$.
\end{lemma}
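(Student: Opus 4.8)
The plan is to reduce boundedness of $\phi^*$ to a single integral inequality and then recognise that inequality as the statement that $\beta$ lies in the associate (K\"othe) dual of $\exp \RL^1(\Xi_2)$, which is $\RL \log \RL(\Xi_2)$.

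First I would rewrite the operator norm. Using the pushforward hypothesis $\phi_*(\mu_1) = \beta \mu_2$ and the change of variables formula, for every $f \in \exp\RL^2(\Xi_2)$ one has
\[
  \norm{\phi^* f}_{\RL^2(\Xi_1)}^2 = \int_{\Xi_1} \abs{f \circ \phi}^2 \de \mu_1 = \int_{\Xi_2} \abs{f}^2 \beta \de \mu_2.
\]
I then record the elementary Luxemburg-norm identity $\norm{\abs{f}^2}_{\exp\RL^1(\Xi_2)} = \norm{f}_{\exp\RL^2(\Xi_2)}^2$, which follows directly from the definition \eqref{eq:expl} (with the standard Young function $t \mapsto e^t-1$) by substituting $s = t^2$ in the defining integral. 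Since $f \mapsto \abs f^2$ maps $\exp\RL^2(\Xi_2)$ onto the nonnegative cone of $\exp\RL^1(\Xi_2)$, boundedness of $\phi^*$ is \emph{equivalent} to the existence of a constant $C$ with
\begin{equation}
  \label{eq:keyreduction}
  \int_{\Xi_2} g \beta \de \mu_2 \le C \norm{g}_{\exp\RL^1(\Xi_2)} \qquad \text{for all } 0 \le g \in \exp\RL^1(\Xi_2).
\end{equation}
This is precisely the assertion that $\beta$ belongs to the associate dual of $\exp\RL^1(\Xi_2)$.

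The forward implication is then immediate. If $\beta \in \RL\log\RL(\Xi_2)$, the H\"older-type inequality \eqref{eq:Holder} with $a = 1$, applied to the pair $\beta$ and $g$, gives $\int_{\Xi_2} g\beta \de\mu_2 \le C\norm{\beta}_{\RL\log\RL}\norm{g}_{\exp\RL^1}$, which is \eqref{eq:keyreduction}; combined with the norm identity this yields $\norm{\phi^* f}_{\RL^2}^2 \le C\norm{\beta}_{\RL\log\RL}\norm{f}_{\exp\RL^2}^2$ and hence boundedness.

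The converse is the main point, and I would prove it by truncation rather than by invoking duality abstractly, since $\exp\RL^1$ fails the $\Delta_2$ condition and its Banach dual is strictly larger than $\RL\log\RL$; the explicit argument keeps us inside the associate dual. Assuming \eqref{eq:keyreduction}, set $\beta_N := \min(\beta,N)$ and test against the bounded function $g_N := \log(2+\beta_N) \in \exp\RL^1(\Xi_2)$. Since $\beta \ge \beta_N \ge 0$ and $\log(2+\beta_N) \ge 0$, the left-hand side of \eqref{eq:keyreduction} dominates $\int_{\Xi_2}\beta_N\log(2+\beta_N)\de\mu_2$. For the right-hand side I bound $\norm{g_N}_{\exp\RL^1}$ \emph{uniformly in} $N$: using the convexity estimate $x^{\theta} \le 1 + \theta(x-1)$ for $x \ge 1$, $\theta \in (0,1]$, with $x = 2+\beta_N$ and $\theta = 1/s$ (for $s \ge 1$), one gets $\int_{\Xi_2}((2+\beta_N)^{1/s}-1)\de\mu_2 \le s^{-1}\int_{\Xi_2}(1+\beta_N)\de\mu_2 \le s^{-1}(\mu_2(\Xi_2)+\mu_1(\Xi_1))$, where $\int_{\Xi_2}\beta\de\mu_2 = \mu_1(\Xi_1) < \infty$ comes from the pushforward identity. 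Taking $s = C_0 := \max\set{1,\ \mu_2(\Xi_2)+\mu_1(\Xi_1)}$ makes this integral at most $1$, so $\norm{g_N}_{\exp\RL^1} \le C_0$ for every $N$. Feeding this into \eqref{eq:keyreduction} gives $\int_{\Xi_2}\beta_N\log(2+\beta_N)\de\mu_2 \le C C_0$ uniformly in $N$, and since $\beta_N\log(2+\beta_N) \nearrow \beta\log(2+\beta)$, monotone convergence yields $\int_{\Xi_2}\beta\log(2+\beta)\de\mu_2 < \infty$, i.e. $\beta \in \RL\log\RL(\Xi_2)$. The only delicate step is this uniform bound on $\norm{g_N}_{\exp\RL^1}$; everything else is a routine change of variables together with the H\"older inequality \eqref{eq:Holder} already available in the text.
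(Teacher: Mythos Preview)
Your proof is correct. The sufficiency direction is essentially identical to the paper's: both use the change-of-variables identity $\int_{\Xi_1}\abs{f\circ\phi}^2\de\mu_1 = \int_{\Xi_2}\abs f^2\beta\de\mu_2$ together with the H\"older inequality \eqref{eq:Holder} between $\RL\log\RL$ and $\exp\RL$.

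The necessity arguments genuinely differ. The paper argues abstractly: if $\beta\notin\RL\log\RL(\Xi_2)$ then $\beta$ does not act as a bounded functional on $\exp\RL(\Xi_2)$, so one can find $\abs f^2\in\exp\RL(\Xi_2)$ for which $\int\abs f^2\beta\de\mu_2 = \infty$, contradicting boundedness of $\phi^*$. You instead produce explicit test functions $g_N = \log(2+\beta_N)$ with truncated $\beta_N = \min(\beta,N)$, bound their $\exp\RL^1$ norms uniformly in $N$ via the concavity estimate $x^{1/s}\le 1+s^{-1}(x-1)$, and conclude by monotone convergence. Your route is more elementary and entirely self-contained; it also sidesteps a subtlety you correctly flag, namely that $\exp\RL$ fails the $\Delta_2$ condition, so its \emph{Banach} dual strictly contains $\RL\log\RL$ and one must work with the associate (K\"othe) dual instead. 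The paper's passing remark that $\RL\log\RL$ is reflexive is in fact inaccurate for precisely this reason, though its argument remains valid once read through the associate-space duality; your explicit construction avoids the issue altogether. The trade-off is that the paper's proof is two lines, while yours is a paragraph of computation---but the computation is routine and the gain in transparency is real.
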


\begin{proof}
    To prove that the condition  $\beta \in \RL \log \RL(\Xi_2)$ is sufficient, assume that $f \in \exp \RL^2(\Xi_2)$, so that $\abs f^2 \in \exp
    \RL(\Xi_2)$. Since $\RL \log \RL(\Xi_2)$ is a reflexive space with dual
    $\exp \RL(\Xi_2)$, we can compute
    \begin{equation}
        \begin{aligned}
            \label{eq:boundedcomp}
            \int_{\Xi_1} \abs{\phi^* f}^2 \de \mu_1 &=  \int_{\Xi_2} \abs f^2
            \beta \de \mu_2 \\
            &\le 
            \norm{\beta}_{\RL \log \RL(\Xi_2)} 
            \norm{f^2}_{\exp \RL(\Xi_2)}\\
            &=  \norm{\beta}_{\RL \log \RL(\Xi_2)} \norm{f}_{\exp
            \RL^2(\Xi_2)}^2. 
    \end{aligned}
    \end{equation}
    Let us now show that the condition is necessary. Indeed, if $\beta \not \in \RL \log \RL(\Xi_2)$,
    it is not a bounded linear functional on $\exp \RL(\Xi_2)$, so we can find
    $\abs f^2 \in \exp \RL(\Xi_2)$ so that the second term on the first line in
    \eqref{eq:boundedcomp} is not finite.
\end{proof}

In the next proposition, we show compactness of a weighted boundary trace. The
proof is similar in nature to the ideas in \cite[Example 3.19 (iii)]{GKL} where
the weight is instead in the interior.

\begin{prop}
  \label{prop:compacttrace}
  Let $\Omega$ be a surface with smooth boundary and $0 \le \beta \in \RL \log
  \RL(\del \Omega)$, $\beta \not \equiv 0$. Then, the trace $T_\beta : \RW^{1,2}(\Omega)
  \to \RL^2(\del \Omega, \beta \rd \ell)$ is compact.
\end{prop}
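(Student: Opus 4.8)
The plan is to establish compactness of the weighted trace $T_\beta : \RW^{1,2}(\Omega) \to \RL^2(\del\Omega, \beta\rd\ell)$ by factoring it through the ordinary (smooth-boundary) trace and exploiting the H\"older-type inequality \eqref{eq:Holder} together with the Moser--Trudinger embedding. Concretely, since $\Omega$ has smooth boundary, the classical trace operator $T : \RW^{1,2}(\Omega) \to \RL^2(\del\Omega,\rd\ell)$ is bounded and compact, and in fact its image lands in a better space. The key structural input is that on a one-dimensional smooth boundary curve, the trace of an $\RW^{1,2}(\Omega)$ function has $\RH^{1/2}(\del\Omega)$ regularity, and by the sharp Moser--Trudinger inequality in the borderline Sobolev case, $\RH^{1/2}(\del\Omega) \hookrightarrow \exp\RL^2(\del\Omega)$ continuously. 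I would record this as the essential ingredient: there is a bounded linear map
\begin{equation}
  \RW^{1,2}(\Omega) \xrightarrow{\ T\ } \RH^{1/2}(\del\Omega) \hookrightarrow \exp\RL^2(\del\Omega).
\end{equation}

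With this in hand, the strategy is to reduce the weighted $\RL^2$-boundedness and compactness to approximating $\beta$ by bounded weights. First I would verify boundedness: for $f\in\RW^{1,2}(\Omega)$, write $\int_{\del\Omega} (Tf)^2\,\beta\,\rd\ell = \norm{\beta\, (Tf)^2}_{\RL^1}$ and apply \eqref{eq:Holder} with $a=1$ to get $\le C\norm{\beta}_{\RL\log\RL}\norm{(Tf)^2}_{\exp\RL} = C\norm{\beta}_{\RL\log\RL}\norm{Tf}_{\exp\RL^2}^2$, which is controlled by $\norm{f}_{\RW^{1,2}(\Omega)}^2$ via the embedding above. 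This is exactly the mechanism already used in the preceding Lemma on composition operators, so the boundedness step is essentially a repackaging of that argument with $\Xi_2 = \del\Omega$ and the measure $\rd\ell$.

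For compactness I would use the standard truncation-and-split argument. Fix $M>0$ and split $\beta = \beta\mathbf{1}_{\{\beta\le M\}} + \beta\mathbf{1}_{\{\beta>M\}} =: \beta_M + r_M$. The operator $T_{\beta_M}$ factors through $T : \RW^{1,2}(\Omega)\to\RL^2(\del\Omega,\rd\ell)$ followed by multiplication by the bounded function $\sqrt{\beta_M}$; since $T$ is compact into $\RL^2(\del\Omega,\rd\ell)$ and multiplication by a bounded function is continuous, $T_{\beta_M}$ is compact. For the remainder $r_M$, the same H\"older estimate as above gives an operator norm bound $\norm{T_{r_M}}^2 \le C\norm{r_M}_{\RL\log\RL(\del\Omega)}\cdot\norm{T}_{\RW^{1,2}\to\exp\RL^2}^2$. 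By the dominated convergence theorem applied to the Luxemburg-norm defining functional, $\norm{r_M}_{\RL\log\RL}\to 0$ as $M\to\infty$ because $\beta\in\RL\log\RL(\del\Omega)$. Hence $T_\beta$ is a norm-limit of compact operators $T_{\beta_M}$, and is therefore compact.

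The main obstacle is the sharp embedding $\RH^{1/2}(\del\Omega)\hookrightarrow\exp\RL^2(\del\Omega)$: this is precisely the borderline (critical) Sobolev case, where the target is an Orlicz space rather than any $\RL^q$, and it is the reason $\RL\log\RL$ — the dual scale to $\exp\RL$ — is the natural integrability threshold for $\beta$. I would want to cite the Moser--Trudinger inequality in the appropriate form (fractional, on a closed curve) rather than reprove it, and take care that the constant in \eqref{eq:Holder} and the compactness of the smooth-boundary trace are invoked correctly. A secondary technical point is justifying $\norm{r_M}_{\RL\log\RL}\to 0$; this follows from absolute continuity of the $\RL\log\RL$ norm with respect to the measure, which holds because the defining Orlicz function has the $\Delta_2$ property, but it is worth stating explicitly since the $\exp\RL$ side notably does \emph{not} have absolutely continuous norm.
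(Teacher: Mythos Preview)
Your proof is correct and follows essentially the same route as the paper's. Both arguments factor the weighted trace through the borderline embedding $\RW^{1,2}(\Omega)\to\exp\RL^2(\del\Omega)$, bound the weighted $\RL^2$ norm by the H\"older-type inequality \eqref{eq:Holder}, and then obtain compactness by approximating $\beta$ in $\RL\log\RL$; the only cosmetic differences are that the paper approximates by smooth $\beta_\eps\le\beta$ (using density of $\RC^\infty$ in $\RL\log\RL$) while you truncate at level $M$ and invoke the $\Delta_2$ property, and the paper cites the optimal trace result of Cianchi--Pick directly rather than routing through $\RH^{1/2}$ and Moser--Trudinger.
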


\begin{proof}
    Define $\theta : \del \Omega \to \R$ as $\theta := \frac 1 \beta
    \bone_{\set{\beta > 0}}$. 
 Consider the diagram 
\begin{equation}
  \label{eq:compacttrace}
  \begin{tikzcd}
    \RW^{1,2}(\Omega) \arrow{r}{T} \arrow{drr}[swap]{T_\beta} & \exp \RL^2(\del
    \Omega)  \arrow{r}{M_{\sqrt \beta}} &
    \arrow{d}{M_{\sqrt \theta}} \RL^2(\del \Omega)\\
   & & \RL^2(\del \Omega,\beta \rd \ell),
  \end{tikzcd}
\end{equation}
where $T$ is the trace and $M_h$ is the operator of multiplication by the
function $h$. The trace operator $T$ is bounded; $\exp \RL^2(\del \Omega)$ is in fact the optimal
target space on $\del \Omega$ for bounded traces from $\RW^{1,2}(\Omega)$, see
\cite[Example 5.3]{cianchipick}. By H\"older inequality \eqref{eq:Holder}
between $\exp L(\del \Omega)$ and $\RL
\log \RL(\del \Omega)$ there exists $C > 0$ such that
\begin{equation}
    \norm{M_{\sqrt \beta} f}_{\RL^2(\del \Omega}^2 = \int_{\del \Omega} f^2
        \beta \de \ell \le
  C\norm{f^2}_{\exp \RL(\del \Omega)} \norm{\beta}_{\RL \log \RL(\del \Omega)} =
  C\norm{f}_{\exp \RL^2(\del \Omega)}^2 \norm{\beta}_{\RL \log \RL(\del \Omega)}.
\end{equation}
In other words, $M_{\sqrt{\beta}}$ is bounded with norm at most $C \norm{\beta}_{\RL
\log \RL(\del \Omega)}$. As for $M_{\sqrt{\theta}}$, we have that
\begin{equation}
    \norm{M_{\sqrt \theta}f}^2_{\RL^2(\del \Omega;\beta \de \ell)} = \int_{\del
    \Omega \cap \set{\beta > 0}} f^2 \de \ell \le \norm{f}_{\RL^2(\del
    \Omega)}^2
\end{equation}
Thus, for a probably different constant $C>0$
independent of $\beta$ we have
\begin{equation}
    \norm{T_\beta} \le C \norm{\beta}_{\RL \log \RL(\del \Omega_\CC)}.
\end{equation}
To prove compactness, it is sufficient to prove that $M_{\sqrt{\beta}} \circ T$
is compact; if $\beta$ is a nonnegative smooth function this follows from the
usual trace restriction theorem. 
By density of smooth functions in $\RL \log \RL$, for every $\eps > 0$, there is
a nonnegative $\beta_\eps \in \RC^\infty(\del \Omega)$ such that $\norm{\beta -
\beta_\eps}_{\RL \log \RL(\del \Omega)} < \eps$ and $\beta_\eps \le \beta$
almost everywhere, so that $\sqrt{\beta} - \sqrt{\beta_\eps} \le \sqrt{\beta -
\beta_\eps}$. But then, 
\begin{equation}
    \norm{M_{\sqrt \beta} \circ T - M_{\sqrt{\beta_\eps}} \circ T}  \le
    \norm{M_{\sqrt{\beta - \beta_\eps}}} \norm T \le \norm{\beta - \beta_\eps}_{\RL \log
    \RL(\del \Omega)} \norm T \le \eps \norm T.
\end{equation}
Thus $M_{\sqrt{\beta}} \circ T$ is a norm limit of compact operators
hence compact itself and $T_\beta$ is compact also.
\end{proof}

We now have the right tools to prove the composition Propositon \ref{prop:comp},
following the structure of the proof of \cite[Theorem 6]{gu}.

\begin{proof}[Proof of Proposition \ref{prop:comp}]
    Let $f \in \CW_\del(\Omega)$ as in Definition \ref{def:bdrysobolev}. Invariance of the
    Dirichlet energy under conformal diffeomorphisms tells us that
    \begin{equation}
        \norm{\nabla f}_{\RL^2( \Omega)} = \norm{\nabla(\phi^*
        f)}_{\RL^2(\Omega_\CC)}.
    \end{equation}
    Since the boundary conformal factor is in $\RL \log \RL$, $\phi$ induces the
    bounded composition operator $(\phi^{-1})^* : \exp \RL^2(\del \Omega_\CC) \to
    \RL^2(\del \Omega)$. Therefore, for every $a \in \R$, we have that
    \begin{equation}
\label{eq:consta}
        \begin{aligned}
        \abs a = \Per(\Omega)^{-1/2} \norm{c}_{\RL^2(\del \Omega)} &\le
        \Per(\Omega)^{-1/2} \left(\norm{f}_{\RL^2(\del \Omega)} + \norm{f-
        a}_{\RL^2(\del \Omega)} \right) \\ &\le 
        \Per(\Omega)^{-1/2} \left(\norm{f}_{\RL^2(\del \Omega)} + \norm{\phi^* f-
        a}_{\exp \RL^2(\del \Omega_\CC)} \right).
    \end{aligned}
    \end{equation}
   We claim that
    there exists  a constant $C > 0$ such that
    \begin{equation}
        \begin{aligned}
        \norm{\phi^* f}_{\RL^2(\del \Omega_\CC)} &\le \inf_{a \in \R}
        \left(\norm{a}_{\RL^2(\del \Omega_\CC)} + \norm{\phi^*f - a}_{\RL^2(\del
        \Omega_\CC)}\right)
        \\
        &\le \inf_{a \in \R} \left(\Per(\Omega)^{-1/2} \Per(\Omega_\CC)^{1/2} +
        C\norm{1}_{\RL \log \RL(\del \Omega_\CC)}\right) \left( \norm{f}_{\RL^2(\del \Omega)} +
        \norm{\phi^*f - a}_{\exp \RL^2(\del \Omega_\CC)} \right).
    \end{aligned}
    \end{equation}
Indeed, the first inequality is just the triangle inequality.  We then use \eqref{eq:consta} to estimate the first term,  and  inequality \eqref{eq:Holder}
together with the relations $ \norm{h}_{\RL^2}^2=\norm{h^2}_{\RL^1}$, $ \norm{h}_{\exp \RL^2}^2=\norm{h^2}_{\exp \RL^1}$  to estimate the second.

    The space $\exp \RL^2(\del \Omega_\CC)$ is the optimal
    target space for traces from $\RW^{1,2}(\Omega_\CC)$, and this is equivalent
    to the validity of a Poincar\'e trace inequality, see \cite[Theorems 1.3 and
    5.3]{cianchipick},
    \begin{equation}
        \inf_a \norm{\phi^* f - a}_{\exp \RL^2(\del \Omega_\CC)} \le C \norm{\nabla
        \phi^* f}_{\RL^2(\Omega_\CC)}.
    \end{equation}
    Combining the previous display formulas  gives us the existence of some constant $C > 0$ such
    that
    \begin{equation}
    \norm{\phi^*f}_{\RW_\del^{1,2}(\Omega_\CC)} \le C \norm{f}_{\RW_\del^{1,2}(\Omega)}.
\end{equation}
Since $\CW_\del(\Omega)$ is dense in $\RW^{1,2}_\del(\Omega)$, the pullback
$\phi^*$ extends to the whole space as a bounded operator as well.
Proving the analogous result for $(\phi^{-1})^*$  is simpler. Since $\Omega_\CC$ has smooth boundary, the spaces  $\RW^{1,2}(\Omega_\CC)$
and $\RW^{1,2}_\del(\Omega_\CC)$ are isomorphic. Compactness (in fact, boundedness is enough here)  of the trace
$\RW^{1,2}(\Omega_\CC) \to \RL^2(\del \Omega_\CC, \beta \rd \ell_{g_\CC})$
obtained in Proposition \ref{prop:compacttrace} then
implies that for every $h \in \CW(\Omega_\CC)$,
\begin{equation}
    \norm{(\phi^{-1})^* h}_{\RW_\del^{1,2}(\Omega)}^2 =\norm{\nabla
        h}_{\RL^2(\Omega_\CC)}^2 + \norm{h}_{\RL^2(\del \Omega_\CC, \beta \rd \ell_{g_\CC})}^2
\le C    \norm{h}_{\RW_\del^{1,2}(\Omega_\CC)}^2.
\end{equation}
By density we once again have that $(\phi^{-1})^*$ extends to the whole space as
a bounded operator, completing the proof.
\end{proof}

We can now prove Theorem \ref{thm:isospectral}.

\begin{proof}[Proof of Theorem \ref{thm:isospectral}]
    Let $E_k$ be a $k+1$ dimensional subspace of $\RW_\del^{1,2}(\Omega)$. Then,
    by Proposition \ref{prop:comp},
    $\phi^*(E_k)$ is a $k+1$ dimensional subspace of
    $\RW_\del^{1,2}(\Omega_\CC)$, and for every $u \in E_k$,
    \begin{equation}
        \frac{\int_\Omega \abs{\nabla u}^2 \de v_g}{\int_{\del \Omega} u^2 \de
        \ell_g}=
        \frac{\int_{\Omega_\CC} \abs{\nabla u}^2 \de v_{g_\CC}}{\int_{\del
            \Omega_\CC} u^2 \beta \de
        \ell_{g_\CC}}.
    \end{equation}
    This implies directly that $\sigma_k(\Omega_\CC,\beta) \le
    \sigma_k(\Omega)$.  The analogous reasoning with $(\phi^{-1})^*$ instead  of $\phi^*$ gives
    the reverse inequality.
\end{proof}

In order to prove Proposition \ref{prop:isom},  we extend the
results of \cite{gu} to a slightly more singular interior conformal factor.

\begin{lemma}
    Let $\CC$ be a conformal class and $(\Omega,g) \in \CC$ be a surface with
    boundary, which has
    interior conformal regularity $\RL \log \RL$ through the conformal
    diffeomorphism $\phi : \Omega_\CC \to \Omega$. Then, the composition
    operator
    \begin{equation}
        \phi^* : \RW^{1,2}(\Omega) \to \RW^{1,2}(\Omega_\CC) \qquad
        \phi^* f := f \circ \phi
    \end{equation}
    induced by $\phi$ is an isomorphism.
\end{lemma}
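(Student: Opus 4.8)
The plan is to mirror the proof of Proposition \ref{prop:comp}, replacing the boundary trace embedding into $\exp \RL^2(\del \Omega_\CC)$ by the sharp interior Sobolev (Moser--Trudinger) embedding $\RW^{1,2}(\Omega_\CC) \hookrightarrow \exp \RL^2(\Omega_\CC)$. The two facts I will lean on are the conformal invariance of the Dirichlet energy in dimension two, which gives $\norm{\nabla f}_{\RL^2(\Omega)} = \norm{\nabla \phi^* f}_{\RL^2(\Omega_\CC)}$, and the identity $\int_\Omega f^2 \de v_g = \int_{\Omega_\CC} (\phi^* f)^2 \eta \de v_{g_\CC}$ coming from $\de v_g = \phi_*(\eta \de v_{g_\CC})$. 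The Dirichlet parts of the two norms thus match exactly, so the whole problem reduces to comparing $\int_{\Omega_\CC} (\phi^* f)^2 \de v_{g_\CC}$ with $\int_{\Omega_\CC} (\phi^* f)^2 \eta \de v_{g_\CC}$ in both directions, using the integrability $\eta \in \RL \log \RL(\Omega_\CC)$.

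First I would prove boundedness of $\phi^*$. Working with $f \in \CW(\Omega)$, I fix a constant $a$ and estimate $\norm{\phi^* f}_{\RL^2(\Omega_\CC)} \le \abs a \Vol(\Omega_\CC)^{1/2} + \norm{\phi^* f - a}_{\RL^2(\Omega_\CC)}$. To control $\abs a$ I use the analogue of \eqref{eq:consta}: writing $\abs a \Vol(\Omega)^{1/2} \le \norm{f}_{\RL^2(\Omega)} + \norm{f - a}_{\RL^2(\Omega)}$ and then bounding $\norm{f - a}_{\RL^2(\Omega)}^2 = \int_{\Omega_\CC} (\phi^* f - a)^2 \eta \de v_{g_\CC}$ by the H\"older-type inequality \eqref{eq:Holder} between $\RL \log \RL(\Omega_\CC)$ and $\exp \RL(\Omega_\CC)$, which yields $\norm{f - a}_{\RL^2(\Omega)} \le C \norm{\eta}_{\RL \log \RL(\Omega_\CC)}^{1/2} \norm{\phi^* f - a}_{\exp \RL^2(\Omega_\CC)}$. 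Since $\Omega_\CC$ has finite volume we also have $\norm{\phi^* f - a}_{\RL^2(\Omega_\CC)} \le C \norm{\phi^* f - a}_{\exp \RL^2(\Omega_\CC)}$, so altogether $\norm{\phi^* f}_{\RL^2(\Omega_\CC)} \le C(\norm{f}_{\RL^2(\Omega)} + \norm{\phi^* f - a}_{\exp \RL^2(\Omega_\CC)})$. Choosing $a$ optimally and invoking the Poincar\'e form of the Moser--Trudinger embedding, $\inf_a \norm{\phi^* f - a}_{\exp \RL^2(\Omega_\CC)} \le C \norm{\nabla \phi^* f}_{\RL^2(\Omega_\CC)} = C \norm{\nabla f}_{\RL^2(\Omega)}$, gives $\norm{\phi^* f}_{\RL^2(\Omega_\CC)} \le C \norm{f}_{\RW^{1,2}(\Omega)}$. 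Combined with Dirichlet invariance and density of $\CW(\Omega)$ (Meyers--Serrin), this extends $\phi^*$ to a bounded operator.

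The reverse bound for $(\phi^{-1})^*$ is the easy direction, just as at the end of Proposition \ref{prop:comp}: for $h \in \CW(\Omega_\CC)$ the Dirichlet energy again transfers for free, while $\int_\Omega ((\phi^{-1})^* h)^2 \de v_g = \int_{\Omega_\CC} h^2 \eta \de v_{g_\CC} \le C \norm{\eta}_{\RL \log \RL(\Omega_\CC)} \norm{h}_{\exp \RL^2(\Omega_\CC)}^2 \le C \norm{h}_{\RW^{1,2}(\Omega_\CC)}^2$, where the first inequality is \eqref{eq:Holder} (together with $\norm{h}_{\exp \RL^2}^2 = \norm{h^2}_{\exp \RL}$) and the second is the Moser--Trudinger embedding $\RW^{1,2}(\Omega_\CC) \hookrightarrow \exp \RL^2(\Omega_\CC)$, available because $\del \Omega_\CC$ is smooth. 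Density then extends $(\phi^{-1})^*$, and since $\phi^*$ and $(\phi^{-1})^*$ are mutually inverse on the dense smooth subspaces, their bounded extensions are inverse isomorphisms.

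I expect the only genuine subtlety --- the interior counterpart of what makes Proposition \ref{prop:comp} work at the boundary --- to be the use of the sharp endpoint embedding $\RW^{1,2}(\Omega_\CC) \hookrightarrow \exp \RL^2(\Omega_\CC)$ and its Poincar\'e version, in place of the $\RL^q$ embeddings used for Lebesgue interior factors in \cite[Theorem 6]{gu}. This is precisely what allows the borderline factor $\eta \in \RL \log \RL$ (rather than $\eta \in \RL^p$ with $p > 1$) to be handled, via the $\RL \log \RL$--$\exp \RL$ duality encoded in \eqref{eq:Holder}; everything else is the routine transcription of the boundary argument to the interior.
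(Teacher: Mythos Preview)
Your proposal is correct and follows exactly the approach the paper takes: the paper's own proof simply says to rerun the argument of Proposition~\ref{prop:comp}, replacing the optimal trace embedding $\RW_\del^{1,2}(\Omega_\CC) \to \exp \RL^2(\del \Omega_\CC)$ and its Poincar\'e inequality by the optimal interior (Moser--Trudinger) embedding $\RW^{1,2}(\Omega_\CC) \to \exp \RL^2(\Omega_\CC)$, and using $\eta \in \RL \log \RL$ in place of $\beta \in \RL \log \RL$. Your write-up is in fact more detailed than the paper's sketch, but the strategy and all the key ingredients are identical.
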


\begin{proof}
    The proof is essentially identical to the proof of Proposition \ref{prop:comp}. We replace the
    result on the traces $\RW_\del^{1,2}(\Omega_\CC) \to
    \exp \RL^2(\del \Omega_\CC)$  and the corresponding Poincar\'e inequality with the optimal Sobolev embedding $\RW^{1,2}(\Omega_\CC) \to \exp
    \RL^2(\Omega_\CC)$, and use the fact that the interior conformal factor
    $\abs{\rd\phi}^2 \in \RL \log \RL$ to get a bounded composition operator $\exp
    \RL^2(\Omega_\CC) \to \RL^2(\Omega)$.
\end{proof}

We can now prove Proposition \ref{prop:isom}.

\begin{proof}[Proof of Proposition \ref{prop:isom}]
    Since $\Omega_\CC$ is a surface with smooth boundary, the spaces
    $\RW^{1,2}(\Omega_\CC)$ and $\RW_\del^{1,2}(\Omega_\CC)$ are isomorphic, via
    some linear map $\iota$. Interior
    conformal regularity $\RL \log \RL$ provides us with an isomorphism
    $\phi^* : \RW^{1,2}(\Omega) \to \RW^{1,2}(\Omega_\CC)$ and boundary
    conformal regularity $\RL \log \RL$ with an isomorphism $\phi_\del^* :
    \RW_\del(\Omega) \to \RW^{1,2}(\Omega_\CC)$. The composition
    $(\phi_\del^*)^{-1} \circ \iota \circ \phi^*$ provides the desired
    isomorphism.
\end{proof}

\section{Spectral asymptotics}
\label{sec:asymptotics}
\subsection{Eigenvalue counting functions of compact operators}
We first  present  some  known results about spectral asymptotics of compact
operators defined via quadratic forms. These  results can be found in the works
of Suslina
\cite{Suslina1999}, Birman--Solomyak \cite{birmansolomjaksobolev}, and
Sukochev--Zanin \cite{sukochevzanin}, in a  more general form. For the convenience of the reader we
state them here in a form which is specific for  our purposes.

Let $\CH$ be a Hilbert space and $K \in \CK(\CH)$ be a self-adjoint nonnegative compact
operator. The non-zero spectrum of $K$ consists of a discrete set of
nonincreasing nonnegative
eigenvalues $\set{\lambda_j(K) : j \in \N}$ counted with multiplicity and
 converging to $0$. 
The variational characterisation of the eigenvalues yields
\begin{equation}
  \lambda_j(K) = \max_{E_j \subset \CH} \min_{u \in E_j \setminus \set 0} \frac{(Ku,u)}{(u,u)},
  \label{eq:Rayleigh}
\end{equation}
where $E_j$ ranges over $j$ dimensional subspaces. Note that the operator $K$ can be equivalently defined via the associated bilinear form appearing in the numerator \eqref{eq:Rayleigh}; we will use this observation further on.
For $\lambda >0$, define the eigenvalue counting function
\begin{equation}
  n(\lambda;K) := \# \set{j : \lambda_j(K) > \lambda},
\end{equation}
and, for a given $\alpha > 0$, the functionals
\begin{equation}
  \overline n_\alpha(K) = \limsup_{\lambda \searrow 0} \lambda^\alpha
  n(\lambda;K) \text{ and } \underline n_\alpha(K) = \liminf_{\lambda \searrow
  0} \lambda^\alpha n(\lambda;K).
\end{equation}
Note that if $\overline n_\alpha(K) = \underline n_\alpha(K) = C_\alpha$, then
\begin{equation}
    n(\lambda;K) = C_\alpha \lambda^{-\alpha}(1 + \smallo 1).
\end{equation}
We make use of the following general properties of  this counting function,
which are collected in \cite[Appendix 1]{birmansolomjaksobolev}, see also the
references therein.

\begin{lemma}
  \label{lem:countingproperties}
  The following properties hold:
  \begin{enumerate}
    \item \label{pr:compactpert} \cite[Lemma 1.16]{birmansolomjaksobolev} For any $\alpha > 0$, the functionals $\overline n_\alpha(K)$ and
      $\underline n_\alpha(K)$ 
      are invariant under compact perturbations of the inner product on $\CH$,
      as well as restriction to subspaces of finite codimension.
    \item \label{pr:closeness} \cite[Lemma 1.18 and its
        proof]{birmansolomjaksobolev}, \cite[Lemma 1.5]{Suslina1999}, Weyl--Fan
        Ky lemma. Let $K_1 \le K_2 \in \CK(\CH)$ be nonnegative self-adjoint compact
      operators. Then, for any $\alpha>0$, 
      \begin{equation}
        \abs{\overline n_\alpha(K_1)^{\frac{1}{1 + \alpha}} -
        \overline n_\alpha(K_2)^{\frac{1}{1 + \alpha}} } \le \overline
        n_\alpha(K_2 - K_1)^{\frac{1}{1+\alpha}}
      \end{equation}
      and
      \begin{equation}
        \abs{\underline n_\alpha(K_1)^{\frac{1}{1 + \alpha}} -
        \underline n_\alpha(K_2)^{\frac{1}{1 + \alpha}} } \le \overline
        n_\alpha(K_2 - K_1)^{\frac{1}{1+\alpha}}.
      \end{equation}
    \item \label{pr:diffspaces} \cite[Lemma 1.15]{birmansolomjaksobolev} Let $K_1 \in \CK(\CH_1)$ and $K_2 \in
      \CK(\CH_2)$ be nonnegative self-adjoint compact operators. Let $B :
      \CH_1 \to \CH_2$ be a bounded operator such that $(K_1 u,u)_{\CH_1} = 0$
      for all $u \in \ker B$. If there is $a > 0$ such that for all $u \in
      \CH_1\setminus \ker B$
      \begin{equation}
        \frac{(K_1 u,u)_{\CH_1}}{(u,u)_{\CH_1}} \le a 
        \frac{(K_2 B u,B u)_{\CH_2}}{(Bu,Bu)_{\CH_2}},
      \end{equation}
      then for all $\lambda > 0$, $n(\lambda;K_1) \le n(a^{-1}\lambda;K_2)$ for
      all $\lambda > 0$.
  \end{enumerate}
\end{lemma}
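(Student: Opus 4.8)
The three assertions are classical consequences of the min-max characterisation \eqref{eq:Rayleigh} together with the Ky Fan subadditivity of counting functions, and my plan is to re-derive each from these two ingredients. I would dispatch the third property first, as it is a pure min-max comparison. Suppose $n(\lambda;K_1)=m$ and let $E_m\subset\CH_1$ be the span of the top $m$ eigenvectors of $K_1$, so that $(K_1u,u)>\lambda(u,u)>0$ for every nonzero $u\in E_m$. The hypothesis that $(K_1u,u)=0$ on $\ker B$ then forces $E_m\cap\ker B=\set 0$, so $B|_{E_m}$ is injective and $B(E_m)\subset\CH_2$ is $m$-dimensional; on it the Rayleigh-quotient hypothesis gives $(K_2Bu,Bu)/(Bu,Bu)>a^{-1}\lambda$. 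By \eqref{eq:Rayleigh} this yields $\lambda_m(K_2)>a^{-1}\lambda$, i.e. $n(a^{-1}\lambda;K_2)\ge m=n(\lambda;K_1)$, which is the claim.

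For the second property the key input is the counting-function form of Ky Fan's inequality: for nonnegative self-adjoint compact $S,T$ and $\mu,\nu>0$,
\begin{equation}
    n(\mu+\nu;S+T)\le n(\mu;S)+n(\nu;T),
\end{equation}
which itself follows from min-max by intersecting a subspace on which $((S+T)u,u)>(\mu+\nu)(u,u)$ with the finite-codimension subspaces where $(Su,u)\le\mu(u,u)$ and $(Tu,u)\le\nu(u,u)$. Writing $K_2=K_1+(K_2-K_1)$ with $K_2-K_1\ge 0$, choosing the split $\mu=t\lambda$, $\nu=(1-t)\lambda$, multiplying by $\lambda^\alpha$ and taking $\limsup_{\lambda\searrow 0}$ gives
\begin{equation}
    \overline n_\alpha(K_2)\le t^{-\alpha}\,\overline n_\alpha(K_1)+(1-t)^{-\alpha}\,\overline n_\alpha(K_2-K_1),\qquad t\in(0,1).
\end{equation}
The minimum of the right-hand side over $t$ is attained at $t=A^{1/(1+\alpha)}\bigl(A^{1/(1+\alpha)}+B^{1/(1+\alpha)}\bigr)^{-1}$, where $A=\overline n_\alpha(K_1)$ and $B=\overline n_\alpha(K_2-K_1)$, and equals exactly $\bigl(A^{1/(1+\alpha)}+B^{1/(1+\alpha)}\bigr)^{1+\alpha}$; taking the $(1+\alpha)$-th root is the desired triangle inequality. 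Monotonicity of the counting function under $K_1\le K_2$ shows $\overline n_\alpha(K_1)\le\overline n_\alpha(K_2)$, so the absolute value reduces to this single inequality, and the $\underline n_\alpha$ statement follows identically using $\liminf(f+g)\le\liminf f+\limsup g$ to retain a $\limsup$ on the perturbation term.

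Finally I would obtain the first property by reducing both invariances to the parts already proved. Restriction to a subspace $\CH'$ of codimension $d$ changes eigenvalues by the interlacing bounds $\lambda_{j+d}(K)\le\lambda_j(K|_{\CH'})\le\lambda_j(K)$, whence $n(\lambda;K|_{\CH'})\le n(\lambda;K)\le n(\lambda;K|_{\CH'})+d$; multiplying by $\lambda^\alpha$ and letting $\lambda\searrow 0$ annihilates the bounded discrepancy $\lambda^\alpha d$, so the functionals are unchanged. For a compact perturbation $\langle u,v\rangle=(u,v)+(Su,v)$ of the inner product I would split $S=S_\eps+R_\eps$ with $S_\eps$ of finite rank and $\norm{R_\eps}<\eps$; the $R_\eps$ part sandwiches $\langle u,u\rangle$ between $(1\mp\eps)(u,u)$, which amounts to rescaling $\lambda\mapsto(1\pm\eps)\lambda$ and hence multiplies the functionals by $(1\pm\eps)^\alpha$, while the finite-rank $S_\eps$ only alters the competing subspaces by finite dimension and is absorbed by the codimension invariance just established. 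Letting $\eps\searrow 0$ gives equality.

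The main obstacle is the constant-tracking in the second property: everything hinges on the precise exponent $1/(1+\alpha)$, which is not an artefact of the argument but the unique power rendering $\overline n_\alpha^{\,1/(1+\alpha)}$ subadditive, and it must be extracted from the one-parameter optimisation above rather than guessed.
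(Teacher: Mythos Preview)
The paper does not actually prove this lemma: it is stated as a compendium of known facts, each item carrying a citation to \cite{birmansolomjaksobolev} or \cite{Suslina1999}, and the text moves on immediately. So there is no ``paper's proof'' to compare against; you have supplied the argument that the authors chose to outsource.

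Your derivation is correct and is essentially the classical route one finds in the cited sources. The min--max comparison for item~(3), the Ky~Fan counting inequality plus the one-parameter optimisation in $t$ that produces the exponent $1/(1+\alpha)$ for item~(2), and the interlacing/finite-rank-plus-small-norm decomposition for item~(1) are exactly the standard ingredients. One small point worth making explicit in item~(1): the way the finite-rank piece $S_\eps$ is ``absorbed'' is that on the finite-codimension subspace $\ker S_\eps$ the two inner products differ only by the small perturbation $R_\eps$, and then you invoke the codimension invariance you already proved for \emph{both} inner products to pass from $\CH$ down to $\ker S_\eps$ and back. Your sentence ``only alters the competing subspaces by finite dimension'' is a slightly informal shorthand for this, but the mechanism is clear and the argument goes through.
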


We will use these abstract results in the concrete situation where $\CH = \RW^{1/2,2}(\Gamma)$, where
$\Gamma$ is a finite collection of smooth curves with length measure $\rd \ell$. For $\beta : \Gamma \to
[0,\infty)$ let $K_\beta$ be the operator in $\RW^{1/2,2}(\Gamma)$ be defined by
the bilinear form
\begin{equation}
  \label{eq:bilinear}
  (K_{\beta} u, v)_{\CH} = \int_\Gamma u \overline v \beta \de \ell, \quad u,v
  \in \operatorname{Dom}(K_{\beta}).
\end{equation}
The following lemma is a direct reinterpretation of \cite[Lemma
4.4]{sukochevzanin} (see also \cite[Theorem 2.1]{RozenShar}) in view of the variational characterisation of the
eigenvalue counting function \cite[Lemma 1.14]{birmansolomjaksobolev}.
\begin{lemma}
    Let $\Gamma$ be a finite collection of smooth curves and $0 \le \beta \in \RL
  \log \RL(\Gamma)$. Let $K_{\beta}$ be the
  self-adjoint operator on $\RW^{1/2,2}(\Gamma)$ defined via the
  bilinear form \eqref{eq:bilinear}. Then  there exists a constant $C(\Gamma)>0$ such that
  \begin{equation}
    n(\lambda;K_{\beta}) \le C(\Gamma) \lambda^{-1} \norm{\beta}_{\RL \log
    \RL(\Gamma)}.
  \end{equation}
  \label{lem:controlledcounting}
\end{lemma}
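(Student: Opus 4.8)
The plan is to realise $K_\beta$ as a concrete pseudodifferential operator on $\RL^2(\Gamma)$ and then to invoke the Cwikel-type estimate of Sukochev--Zanin. Equip $\CH = \RW^{1/2,2}(\Gamma)$ with the inner product $(u,v)_\CH = (\Lambda u, v)_{\RL^2(\Gamma)}$, where $\Lambda := (1 + \Delta_\Gamma)^{1/2}$ is the positive first-order elliptic operator on the compact $1$-manifold $\Gamma$ for which the domain of $\Lambda^{1/2}$ is exactly $\RW^{1/2,2}(\Gamma)$. Unwinding the definition \eqref{eq:bilinear}, the identity $(\Lambda K_\beta u, v)_{\RL^2} = (K_\beta u, v)_\CH = \int_\Gamma u \bar v \beta \de \ell = (M_\beta u, v)_{\RL^2}$, valid for all $v$, shows that $K_\beta = \Lambda^{-1} M_\beta$. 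Boundedness and nonnegativity of the form, hence of $K_\beta$, follow from the sharp embedding $\RW^{1/2,2}(\Gamma) \hookrightarrow \exp\RL^2(\Gamma)$ together with the H\"older inequality \eqref{eq:Holder} between $\exp\RL$ and $\RL\log\RL$, exactly as in the proof of Proposition \ref{prop:compacttrace}.

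First I would pass to a genuinely self-adjoint $\RL^2$-operator. Writing $M_\beta = M_{\sqrt\beta}^2$ and using that the nonzero eigenvalues of $AB$ and $BA$ coincide with multiplicity, the nonzero eigenvalues of $K_\beta = (\Lambda^{-1} M_{\sqrt\beta}) M_{\sqrt\beta}$ agree with those of the nonnegative self-adjoint operator
\[
  T := M_{\sqrt\beta} \Lambda^{-1} M_{\sqrt\beta} \quad \text{on } \RL^2(\Gamma).
\]
Consequently $n(\lambda; K_\beta) = n(\lambda; T)$ for every $\lambda > 0$, and it suffices to bound the counting function of $T$. The variational characterisation of \cite[Lemma 1.14]{birmansolomjaksobolev} is what licenses this reinterpretation: it lets one pass freely between the eigenvalue counting function of the operator on $\CH$ and the one attached to the bilinear form, so that the estimate quoted below for $T$ applies verbatim to $K_\beta$.

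Second, I would apply \cite[Lemma 4.4]{sukochevzanin}. The point is that $\Lambda^{-1}$ is a classical elliptic operator of order $-1$ on a compact $1$-dimensional manifold, so its singular values decay like $1/n$; it lies in the weak trace ideal $\CL_{1,\infty}$ with $n(\lambda;\Lambda^{-1}) = O(\lambda^{-1})$. This is precisely the borderline order for which the symmetrised weighted operator $M_{\sqrt\beta}\Lambda^{-1}M_{\sqrt\beta}$ admits a Cwikel--Birman--Schwinger bound controlled by the $\RL\log\RL$ norm of the weight, the logarithmic refinement of $\RL^1$ being forced by the $1/n$ (rather than summable) decay of the singular values. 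Sukochev--Zanin's lemma then yields a constant $C(\Gamma) > 0$, depending on $\Gamma$ only through $\Lambda$, with $n(\lambda; T) \le C(\Gamma) \lambda^{-1}\norm{\beta}_{\RL\log\RL(\Gamma)}$, which is the claim.

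The main obstacle is the last step: checking that our concrete $T$ fits the hypotheses of \cite[Lemma 4.4]{sukochevzanin} and that the Orlicz norm appearing in their estimate is indeed the $\RL\log\RL$ norm. Once the structural identification $K_\beta = \Lambda^{-1}M_\beta$ and the passage to $T$ are in place, the remainder is a dictionary translation; the delicate point is recognising $\RL\log\RL$ as exactly the admissible weight class at the order-$(-1)$ / dimension-$1$ borderline, which is the dual pairing with the sharp trace target $\exp\RL^2$ underlying Proposition \ref{prop:compacttrace}.
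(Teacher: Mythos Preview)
Your proposal is correct and follows essentially the same approach as the paper: the paper states that the lemma is a ``direct reinterpretation of \cite[Lemma 4.4]{sukochevzanin} \ldots\ in view of the variational characterisation of the eigenvalue counting function \cite[Lemma 1.14]{birmansolomjaksobolev}'', and what you have written is precisely an explicit unwinding of that reinterpretation via the identification $K_\beta = \Lambda^{-1}M_\beta$ and the $AB$/$BA$ passage to $T = M_{\sqrt\beta}\Lambda^{-1}M_{\sqrt\beta}$ on $\RL^2(\Gamma)$. The only point to keep tidy is the bookkeeping of domains in the factorisation $M_\beta = M_{\sqrt\beta}\circ M_{\sqrt\beta}$ (with $M_{\sqrt\beta}:\RW^{1/2,2}\to\RL^2$ and its adjoint $\RL^2\to\RW^{-1/2,2}$), which you have handled correctly.
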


We now have the required tools to prove Theorem \ref{thm:maingeneral}.

\subsection{Proof of Theorem \ref{thm:maingeneral}}
  We turn to the second, equivalent, statement. Recall that the eigenvalues of the weighted
  Steklov problem on a surface with smooth boundary are characterised
  variationally as
  \begin{equation}
    \label{eq:rayleighdtn}
\sigma_k(\Omega,\beta) = \min_{E_k} \max_{U \in E_k \setminus \set 0}
    \frac{\int_{\Omega} \abs{\nabla U}^2 \de A_{g}}{\int_{\del
    \Omega} u^2 \beta \de \ell_{g}}.
  \end{equation}
  Here $E_k \subset \RW^{1,2}(\Omega)$ (which is isomorphic to
  $\RW^{1,2}_\del(\Omega)$) is a $k+1$
     dimensional subspace, and $u:=\tau U$, where   $\tau : \RW^{1,2}(\Omega)
     \to \RW^{1/2,2}(\del \Omega)$ is the trace operator, which is continuous.
     Here and further on we adopt the following convention: capital letters
     denote functions in the interior, and the corresponding lower case letters
     denote their boundary traces.

Let
 \begin{equation}
     \label{eq:cx}
    \CX := \set{V \in \RW^{1,2}(\Omega) : \int_{\del \Omega} v \beta \de
    \ell_{g} = 0}.
  \end{equation}
  be the orthogonal complement in $\RL^2(\del \Omega;\beta \de
  \ell_{g})$ to the kernel of the weighted Dirichlet-to-Neumann map, i.e. to the constant functions.
 We equip $\CX$ with the inner product
  \begin{equation}
    (U,U)_\CX = \int_{\Omega} \abs{\nabla U}^2 \de A_{g}.
  \end{equation}
Let us define an operator $Q_\beta$ on $\CX$ via the bilinear form 
\begin{equation}
\label{eq:Qbeta}
(Q_\beta U, V)_{\CX} = \int_{\partial \Omega} u \, v \, \beta \de \ell_g, \quad u,v \in \CX.
\end{equation}
Clearly,  we have that
  \begin{equation}
    \label{eq:maxmin}
    \lambda_k(Q_\beta) = \max_{E_k \subset \CX} \min_{u \in E_k \setminus \set 0}
    \frac{\int_{\del \Omega} u^2 \beta \de \ell_{g}}{(U,U)_\CX}.
  \end{equation}
  In view of \eqref{eq:maxmin} and \eqref{eq:rayleighdtn} we have that for $k \ge 1$,
  $\sigma_k(\Omega,\beta)^{-1} = \lambda_k(Q_\beta)$, so that 
  \begin{equation}
    \label{eq:equivalence}
    N(\sigma;M,\beta) - 1 = n(\sigma^{-1};Q_\beta),
  \end{equation}
where we have subtracted one  on the left to account for the eigenvalue zero. 
 Let us  find the asymptotics of $n(\sigma^{-1};Q_\beta)$
as $\sigma^{-1} =:\lambda \searrow 0$.  
  It follows from Lemma \ref{lem:countingproperties}\eqref{pr:compactpert} that
  the asymptotics of  $n(\lambda;Q_\beta)$  does not change if we  first
  replace $(U,U)_\CX$
  with $(U,U)_\CX + (U,U)_{\RL^2(\Omega)}$  (this is a compact perturbation), and then lift the orthogonality
  condition,  in order to consider $U \in \RW^{1,2}(\Omega)$ as in
  \eqref{eq:rayleighdtn}. By the density of smooth functions in $\RL \log \RL$,
  for every $\eps > 0$ we can find a smooth $\beta_\eps \in \RC^\infty(\del
  \Omega)$
  such that $\norm{\beta - \beta_\eps}_{\RL \log \RL} < \eps$. Without loss of
  generality, we suppose that $\beta_\eps \le \beta$ almost everywhere so that
  $Q_\beta -
  Q_{\beta_\eps}$ is a positive operator. Since we know, by the general theory
  of pseudodifferential operators, that as $\lambda \searrow 0$
  \begin{equation}
    n(\lambda;Q_{\beta_\eps}) = \frac{\lambda^{-1}}{\pi} \int_{\del \Omega}
    \beta_\eps \de \ell_g
    + \smallo{\lambda^{-1}},
  \end{equation}
  it is sufficient by Lemma \ref{lem:countingproperties}\eqref{pr:closeness} to
  show that
  \begin{equation}
    n(\lambda;Q_\beta - Q_{\beta_\eps}) \le C \lambda^{-1} \norm{\beta -
    \beta_\eps}_{\RL \log \RL},
  \end{equation}
  with $C$ depending only on $\Omega$.  It immediately follows from \eqref{eq:Qbeta} that  $\ker \tau \subset \ker(Q_\beta - Q_{\beta_\eps})$.
  Defining $K_{\beta}$ as in Lemma \ref{lem:controlledcounting} with $\Gamma = \del
  \Omega$, we have that for all $U \in \RW^{1,2}(\Omega)$, 
  \begin{equation}
   ( (K_{\beta} - K_{\beta_\eps}) u,u)_{\RW^{1/2,2}(\del \Omega)} = ((Q_\beta - Q_{\beta_\eps}) U, U)_{\RW^{1,2}(\Omega)}
    \end{equation}
    By the trace theorem, we also have that there exists $C_\Omega$ such that
    \begin{equation}
      (\tau U,\tau U)_{\RW^{1/2,2}(\del \Omega)} \le C_\Omega (U,U)_{\RW^{1,2}(\Omega)}.
    \end{equation}
    By applying first Lemma \ref{lem:countingproperties}\eqref{pr:diffspaces}
    then Lemma \ref{lem:controlledcounting} we  deduce that 
    \begin{equation}
      \begin{aligned}
        n(\lambda;Q_\beta - Q_{\beta_\eps}) &\le n(C_\Omega \lambda;K_{\beta} -
        K_{\beta_\eps}) \\
      &\le C_\Omega' \lambda^{-1} \norm{\beta-\beta_\eps}_{\RL \log \RL(\del
      \Omega)} \\
      &\le C_\Omega' \lambda^{-1} \eps.
      \end{aligned}
    \end{equation}
    Since this holds for arbitrary $\eps > 0$, we deduce that 
    \begin{equation}
      n(\lambda;Q_\beta) = \frac{\lambda^{-1}}{\pi} \int_{\del \Omega} \beta \de
      \ell_g +
      \smallo{\lambda^{-1}}
    \end{equation}
    and in view of  \eqref{eq:equivalence} this completes the proof of the theorem.

\section{Examples}
\label{sec:examples}
In this last section, we  present  examples of domains having conformal
regularity $\RL \log \RL$ and explore the sharpness of  Theorem
\ref{thm:maingeneral}. We give planar domains as example, but they extend in a
straightforward manner to domains in a complete Riemannian surface.

\subsection{Chord-arc domains}

Recall that a Jordan domain $\Omega \subset \mathbb{R}^2$ is called a
\emph{chord-arc} (or \emph{Lavrentiev}) \emph{domain} if there exists a constant
$C$ such that for any $x,y \in \del \Omega$ 
$$\dist_{\del \Omega} (x,y) \le C \dist_{\R^2}(x,y),$$
where the left-hand sides denotes the length of the shortest arc of the boundary
joining $x$ and $y$, and the right-hand side denotes the distance between $x$
and $y$ in $\mathbb{R}^2$.  It is clear that any Lipschitz domain is a chord-arc
domain. The class of chord-arc domains is larger than Lipschitz and includes, in
particular, the domain bounded between two logarithmic spirals.  Note that domains with cusps are not
chord-arc.

There is a large literature on the conformal regularity of chord-arc domains,
see, for instance \cite{Jerison} and references therein.
The following result is well-known. We outline its proof below for the convenience of the reader.
\begin{prop} Let $\Omega \subset \mathbb{R}^2$ be a chord-arc domain and let $\phi :\D \to \Omega$ be a conformal map.
Then $\phi' \in \RL^p(\partial \Omega)$ for some $p>1$. 
\end{prop}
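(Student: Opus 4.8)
The plan is to establish the $\RL^p$ integrability of $\phi'$ on the boundary by combining a reverse H\"older inequality for the derivative of the conformal map with the geometric characterisation of chord-arc domains. The key classical input is that for a chord-arc domain, the conformal map $\phi : \D \to \Omega$ has derivative satisfying a so-called $A_\infty$ or Muckenhoupt-type condition on the boundary circle. Concretely, I would first recall that $\abs{\phi'}$ on $\del \D = \S^1$ represents the density of harmonic measure, and that the chord-arc condition is equivalent to $\log \abs{\phi'}$ being a function of bounded mean oscillation (BMO) on the circle. This equivalence is precisely the content of the theory surveyed in \cite{Jerison}: chord-arc domains are exactly those for which the conformal welding has this BMO-type regularity.

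The second step is to invoke the self-improving nature of the Muckenhoupt/$A_\infty$ condition. Once one knows that $\abs{\phi'}$ is an $A_\infty$ weight on $\S^1$ (equivalently, satisfies a reverse H\"older inequality), the standard Gehring-type lemma guarantees that there exists $p > 1$ such that
\begin{equation}
\left( \frac{1}{\abs I} \int_I \abs{\phi'}^p \de \theta \right)^{1/p} \le C \cdot \frac{1}{\abs I} \int_I \abs{\phi'} \de \theta
\end{equation}
for all arcs $I \subset \S^1$. Summing over a cover of $\S^1$ by such arcs, or simply applying the inequality with $I = \S^1$, yields $\abs{\phi'} \in \RL^p(\S^1)$. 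The final step is to transfer this from the source circle $\S^1$ to the target boundary $\del \Omega$: by Definition \ref{def:conformal factors}, the boundary length measure on $\del \Omega$ is the pushforward $\phi_*(\beta \de \ell)$ with $\beta = \abs{\phi'}$, so $\phi' \in \RL^p(\del \Omega)$ translates the integrability statement from the parameter circle into the boundary conformal factor being $\RL^p$, for an exponent $p > 1$.

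The main obstacle, and the step that requires the most care, is the first one: verifying that the chord-arc hypothesis genuinely yields the $A_\infty$ (reverse H\"older) property of $\abs{\phi'}$, rather than merely some weaker integrability. This is where all the geometry enters, and it is the reason the result is stated as ``well-known'' with an outlined proof rather than a self-contained one. I expect the author's proof to cite the Lavrentiev-domain theory directly, perhaps via the relationship between the chord-arc condition and the boundedness of the Cauchy integral, or via Pommerenke's results on conformal maps onto domains with rectifiable boundary \cite[Theorem 6.8]{Pommerenkebook} combined with the $A_\infty$ characterisation. An alternative and more self-contained route, should the direct citation be deemed insufficient, would be to use the explicit description of the boundary behaviour: the chord-arc condition controls the ratio of arc length to chord length, which in conformal coordinates bounds the oscillation of $\arg \phi'$ and hence places $\log \phi'$ in BMO, from which $A_\infty$ follows by the John--Nirenberg inequality. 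Either way, once the reverse H\"older inequality is in hand, the passage to $\RL^p$ integrability with $p>1$ is routine.
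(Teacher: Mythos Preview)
Your approach is essentially the paper's: chord-arc forces $|\phi'|$ to be a Muckenhoupt weight on $\S^1$, and Muckenhoupt weights are self-improving into $\RL^p$ for some $p>1$. The paper simply cites this more directly (Zinsmeister for $\phi'\in A_q$, then Garnett for $A_q\Rightarrow\RL^p$), while you route through $A_\infty$ and the reverse H\"older/Gehring lemma; these are the same fact at different entry points of the theory.

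Two small wrinkles worth flagging. First, the implication ``$\log|\phi'|\in\mathrm{BMO}\Rightarrow|\phi'|\in A_\infty$'' that you slip in is not valid in general (only a small power of $w$ need lie in $A_\infty$ when $\log w\in\mathrm{BMO}$); the correct characterisation of chord-arc domains is directly $|\phi'|\in A_\infty$, which you also state, so the argument survives but the BMO detour should be dropped. Second, your ``transfer from $\S^1$ to $\partial\Omega$'' step is unnecessary and stems from a harmless typo in the proposition: what is meant and used throughout the paper is $|\phi'|\in\RL^p(\partial\D)$, i.e.\ the boundary conformal factor $\beta\in\RL^p(\S^1)$, which is exactly what the reverse H\"older inequality on $\S^1$ already gives.
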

\begin{proof}
Since $\Omega$ is a chord-arc domain, by \cite[Theorem 1]{Zins82} we have that
$\phi' \in \RA_q$ for some $q>1$, where $\RA_q$ denotes a Muckenhoupt class of
weights (see, for instance \cite[Section VI.6]{garnett} for a definition). By
\cite[Corollary 6.10]{garnett}, every Muckenhoupt weight on $\del \Omega$ of
class $\RA_q$, $q > 1$
is in $\RL^p(\del \Omega)$ for some $p > 1$, which is our claim.
\end{proof}
\subsection{Domains with  cusps}
\label{subsec:cusps}
 Let $\Omega \subset \mathbb{R}^2$  be a domain with boundary  $\partial \Omega$ which is a finite 
union of  smooth curves. If two curves meet at an interior angle zero we say
that they form an \emph{outward cusp}, and if the interior angle is equal to
$2\pi$ we say that they form an \emph{inward cusp}.

At the tip  $x_0$ of an inward cusp $\phi'(x_0)=0$, in fact
domains with inward cusps have $\phi' \in \RC^{0,1}(\partial \Omega)$ 
\cite[Theorem 3.9]{Pommerenkebook}. A typical example is the standard cardioid domain defined in polar coordinates
as $\set{(r,\theta) : r=2(1+\cos \theta)}$, for which $\phi(z)=(z+1)^2$. 

Consider now domains with outward cusps.  Suppose that in a neighbourhood of the
outward cusp at $x_0$ the boundary of $\Omega$ consists of two smooth curves
$\gamma_1(t)$, $\gamma_2(t)$, where $t$ is the arc length parameter and
$\gamma_1(0) = \gamma_2(0) = x_0$. We say that $\Omega$
has a \emph{slow} cusp at $x_0$ if there is $\alpha \in (0,1)$ (the speed of the
cusp) such that
\begin{equation}
  \lim_{t \searrow 0}\frac{\abs{\gamma_1(t) - \gamma_2(t)}}{t^{1 + \alpha}} = s_\alpha > 0
\end{equation}
In turn if there is $C$ such that $t^{-2} \abs{\gamma_1(t) - \gamma_2(t)} \le C
< \infty$ for all $t > 0$ we say that $\Omega$ has a \emph{fast} cusp at $x_0$. 

 It is shown in \cite{nazarovtaskinen} that whenever a domain has a fast cusp, 
the Dirichlet-to-Neumann map does not have a compact resolvent. Therefore,  its
spectrum is not discrete and  Weyl's law can not hold.   However, the
Dirichlet-to-Neumann map for a domain whose boundary is Lipschitz except at a
finite number of slow cusps has a compact resolvent,
and hence a discrete spectrum.

Let $\phi : \Omega_{\CC}  \to \Omega$ be a conformal diffeomorphism and let $z_0=
\phi^{-1}(x_0)$ be the pre-image of a cusp of speed $\alpha$. Applying \cite[Proposition 2.10]{kaiserlehner} to \cite[Corollary
1]{prokhorovcusp}, we see that as  $z \to z_0$ the
conformal factor $|\rd \phi(z)|$ behaves asymptotically as
\begin{equation}
  \label{eq:cusp}
  \abs{ \rd \phi(z)}  = O\left( |z - z_0|^{-1} \left(-\log(\abs{z -
    z_0}\right)^{-1-\frac 1 \alpha}\right). 
\end{equation}
A direct calculation gives that $\abs{\rd \phi} \in \RL \log \RL$ if and only if
$0 < \alpha < 1$, in other words precisely when the spectrum is discrete. This shows that Theorem \ref{thm:maingeneral} gives in a sense an optimal condition for the validity of Weyl's law. 

Let us summarize the results of this subsection in the following
\begin{prop}
\label{prop:cusps}
Let $\Omega \subset \mathbb{R}^2$ be a domain  with piecewise smooth  boundary, possibly with interior and exterior cusps. 
If all exterior cusps are slow then Weyl's law \eqref{eq:weyllaw} holds for the counting function of the Steklov eigenvalues on $\Omega$. 
Moreover, if $\Omega$ has at least one fast cusp, then the Steklov spectrum of $\Omega$ is not discrete. 
\end{prop}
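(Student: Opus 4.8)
The plan is to prove Proposition \ref{prop:cusps} by combining the asymptotic formula for the conformal factor near a cusp tip with the integrability criterion underlying Theorem \ref{thm:maingeneral}, together with the compactness/non-compactness dichotomy already recorded in the subsection. The proposition has two assertions: first, that Weyl's law holds when all exterior cusps are slow; second, that the presence of a fast cusp destroys discreteness of the spectrum.

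For the first assertion I would argue as follows. The boundary of $\Omega$ is smooth away from the cusp tips, and near each interior (inward) cusp the conformal factor $\abs{\rd \phi}$ extends to a Lipschitz function by \cite[Theorem 3.9]{Pommerenkebook}, hence is bounded and causes no integrability obstruction. Near each exterior cusp of speed $\alpha \in (0,1)$, formula \eqref{eq:cusp} gives the pointwise bound
\begin{equation}
  \abs{\rd \phi(z)} = O\left( \abs{z - z_0}^{-1} \left(-\log \abs{z - z_0}\right)^{-1 - \frac 1 \alpha}\right)
\end{equation}
as $z \to z_0$. The key step is then a direct calculation, in polar coordinates $z - z_0 = r e^{i\theta}$ on the circle boundary component of $\Omega_\CC$, verifying that such a factor lies in $\RL \log \RL$ precisely when $\alpha < 1$: one checks that $\int_0^{\delta} r^{-1}(-\log r)^{-1-1/\alpha}\,\bigl(\log(2 + \cdots)\bigr)\,\de r < \infty$, where the extra logarithmic weight contributes only one additional power of $-\log r$ and is absorbed since $1 + 1/\alpha > 2$ for $\alpha < 1$. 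Because the finitely many cusp contributions are each integrable in $\RL \log \RL$ and the rest of the boundary factor is bounded, the boundary conformal factor $\beta = \abs{\rd \phi}$ lies in $\RL \log \RL(\del \Omega_\CC)$. Thus $\Omega$ has boundary conformal regularity $\RL \log \RL$, and Theorem \ref{thm:maingeneral} applies directly to give \eqref{eq:weyllaw}.

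For the second assertion I would invoke the result of \cite{nazarovtaskinen} already cited in the subsection: when $\Omega$ has a fast cusp, the Dirichlet-to-Neumann map fails to have compact resolvent, so its spectrum is not discrete. This is essentially a citation rather than a new argument, but to make the dichotomy coherent I would note that fast cusps correspond exactly to the failure of the compact trace embedding, consistent with Proposition \ref{prop:compacttrace} requiring $\beta \in \RL \log \RL$; one can see that for a fast cusp the estimate \eqref{eq:cusp} degenerates so that $\beta \notin \RL \log \RL$, matching the loss of compactness.

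The main obstacle is the integrability computation near the slow cusp tip. The subtlety is that $\RL \log \RL$ membership requires controlling $\int \beta \log(2 + \beta)$ rather than merely $\int \beta$, so one must track the logarithmic weight carefully: with $\beta \sim r^{-1}(-\log r)^{-1-1/\alpha}$ one has $\log(2 + \beta) \sim -\log r$ near the tip, producing an integrand of order $r^{-1}(-\log r)^{-1/\alpha}$, which is integrable in $r$ near $0$ exactly when $1/\alpha > 1$, i.e. $\alpha < 1$. Getting this threshold to align precisely with the discreteness threshold — and confirming that the boundary-cusp case ($\RL \log \RL$ on a curve) rather than the interior-cusp case governs the Steklov problem — is where the care is needed; the rest follows mechanically from the theorems already established.
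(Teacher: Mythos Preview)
Your proposal is correct and follows essentially the same route as the paper: the paper likewise handles inward cusps via Lipschitz continuity of $\phi'$, invokes the asymptotic \eqref{eq:cusp} for slow outward cusps, states that a ``direct calculation'' shows $\abs{\rd\phi}\in\RL\log\RL$ iff $0<\alpha<1$, and then appeals to Theorem~\ref{thm:maingeneral} and to \cite{nazarovtaskinen} for the fast-cusp case. Your write-up in fact spells out that direct calculation (the integrand $r^{-1}(-\log r)^{-1/\alpha}$ and the threshold $1/\alpha>1$) more explicitly than the paper does.
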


\begin{remark}
It would be interesting to understand if there exist domains for which the Steklov spectrum is discrete but the Weyl's law \eqref{eq:weyllaw} does not hold.
 To construct such an example one needs to find a domain $\Omega$ with  the boundary conformal
  factor in $\RL^1 \setminus \RL \log \RL$, and yet for which the 
 resolvent of the Dirichlet-to-Neumann map is still compact.  
We note that in terms of the weighted problem it seems like this would require going beyond the Orlicz scale: indeed,  for
every $0 \le a < 1$ one can find $\beta \in \RL  (\log \RL)^a(\del \Omega)$ 
so that the embedding $\RW^{1,2}(\partial \Omega) \to \RL^2(\partial \Omega,
\beta \rd \ell)$ is not compact, following the proof found in
\cite[Example 3.19]{GKL}.
\end{remark}

\section{Further remarks and extensions}
\label{sec:remarks}
\subsection{The Steklov problem with indefinite weight}
\label{sec:weighted}

Suppose for now that $\Omega$ is a surface with smooth boundary, and given $\beta :
\del \Omega \to \R$ consider the Steklov problem with an indefinite weight:
\begin{equation}
    \label{prob:weightedlip}
    \begin{cases}
        \Delta u = 0 & \text{in } \Omega; \\
        \del_\nu u = \beta \sigma u & \text{on } \del \Omega.
    \end{cases}
\end{equation}
Indefinite eigenvalue problems of this type have been  considered in the literature, see e.g. \cite{Sandgren, Suslina1999B, Suslina1999, Agranovich}.
If $0 \not \equiv \beta \in \RL \log \RL(\del \Omega)$ changes sign on sets of
positive measure in $\del \Omega$, then the non-zero eigenvalues form two sequences
$\set{\sigma_k^\pm(\Omega,\beta) : k \in \N}$ consisting of the positive and
negative eigenvalues, accumulating respectively at $\pm \infty$. To define the
variational principle, let us first denote 
\begin{equation}
    \norm{f}_{\RW_\del^{1,2}(\Omega;\beta)}^2 = \int_\Omega \abs{\nabla f}^2 \de
    v_g + \int_{\del \Omega} f^2 \abs \beta \de \ell_g
\end{equation}
and
\begin{equation}
    \CW_\del(\Omega;\beta) := \set{f : \overline \Omega \to \R : f \in
        \RC^\infty(\Omega) \text{ and } \norm{f}_{\RW_\del^{1,2}(\Omega;\beta)}
    < \infty}.
\end{equation}
We denote by $\RW^{1,2}_\del(\Omega;\beta)$ the closure of
$\CW_\del(\Omega;\beta)$ under the $\norm \cdot_{\RW^{1,2}_\del(\Omega;\beta)}$
norm, and $\CX$ to be the subset of $\RW^{1,2}_\del(\Omega;\beta)$ orthogonal
to $\beta$. 
Following
\cite{birmansolomjaksobolev,Suslina1999}, the non-zero eigenvalues of problem
\eqref{prob:weightedlip} satisfy the
variational principle
\begin{equation}
    \frac{\pm 1}{\sigma^\pm_k(\Omega,\beta)} = \min_{F_k} \max_{u \in F_k
    \setminus \set 0} \pm \frac{\int_{\del \Omega} u^2 \beta \de
    \ell_g}{\int_{\Omega} \abs{\nabla u}^2 \de A_g},
\end{equation}
where $F_k$ is a codimension $k-1$ subspace of $\CX$. Denoting by
\begin{equation}
    N^\pm(\sigma;\Omega,\beta) := \#\set{k : 0 < \pm\sigma_k^\pm(\Omega,\beta)<
    \sigma}
\end{equation}
the counting functions for each of those sequences, it follows from the work of
Birman--Solomyak \cite{birmansolomjakasymp,birmansolomjakproofs} (see
\cite[Theorem 6.1]{ponge} for a modern proof, in English) that if $ \beta$
is smooth, then
\begin{equation}
    \label{eq:weylindefinite}
    N^\pm(\sigma;\Omega,\beta) = \frac{\sigma}{\pi} \int_{\del \Omega} \beta_\pm
    \de \ell + \smallo{\sigma}
\end{equation}
where $\beta_\pm = \max\set{0,\pm \beta}$ are the positive and negative parts of
$\beta$. This formula is valid whether or not $\beta$ takes both positive and
negative values. Using the same methods as in Section \ref{sec:asymptotics} allows us to
extend this result to $\beta \in \RL \log \RL(\del \Omega)$. We note that the
results in Lemma \ref{lem:countingproperties} are in fact proven in
\cite{birmansolomjaksobolev,Suslina1999} for operators
with both positive and negative spectrum, with the obvious redefinition of
the functions $\overline n_\alpha^\pm$ and $\underline n_\alpha^\pm$. 

When
$\Omega$ has non-smooth boundary, we consider once again a conformal map
$\phi : \Omega_\CC \to \Omega$. If the product $\phi^* \beta \abs{\rd \phi} \in
\RL \log \RL(\del \Omega_\CC)$, then the proof of Proposition \ref{prop:isom}
    carries through and  $\phi$ induces an isomorphism $\phi^*$ between
    $\RW^{1,2}_\del(\Omega;\beta)$ and $\RW^{1,2}_\del(\Omega_\CC;\phi^*
    \beta)$. If both $\phi^* \beta$ and $\phi^*\beta \abs{\rd \phi}$ are in $\RL
    \log \RL(\del \Omega_\CC)$ then
    \begin{equation}
        \RW_\del^{1,2}(\Omega_\CC;\phi^*\beta) \cong
        \RW_\del^{1,2}(\Omega_\CC) \cong
        \RW_\del^{1,2}(\Omega_\CC;\phi^*\beta\abs{\rd \phi})
    \end{equation}
    and as in Theorem \ref{thm:isospectral} problem
\eqref{prob:weightedlip} is isospectral to
\begin{equation}
    \begin{cases}
        \Delta u = 0 & \text{in } \Omega_\CC \\
        \del_\nu u =  \abs{\rd \phi} \phi^* \beta \sigma u & \text{on } \del
        \Omega_\CC.
    \end{cases}
\end{equation}
We see directly that a sufficient  condition for having the Weyl law
\eqref{eq:weylindefinite} is also that $\abs{\rd \phi} \phi^* \beta \in \RL \log
\RL(\del \Omega_\CC)$. For any surface with Lipschitz boundary,
we have that $\abs{\rd \phi}$ and $\abs{\rd \phi}^{-1}$ are in $\RL^p$ for some $p > 1$ with H\"older
conjugate $p'$, see \cite[proof of Lemma 5.1]{baratchartbourgeoisleblond}.
Therefore, if $\phi^* \beta \in \RL^q(\del \Omega_\CC)$ for $q >
p'$,  the Weyl law \eqref{eq:weylindefinite} holds. Arguing as in  \cite[Theorem
4]{gu}, one can show that for  $q > p'$  the map  $\phi$ induces a bounded composition operator $\phi^* :
\RL^{\frac{qp}{p-1}}(\partial \Omega) \to \RL^q(\partial \Omega_\CC)$. Therefore, a
sufficient condition for the Weyl law to hold is $\beta \in \RL^r(\del \Omega)$, for some $r >
p^2/(p-1)^2$. In particular, if $\Omega$ is a Lipschitz domain and $\beta$ is in
some Orlicz space contained in $\RL^q(\del \Omega)$ for any  $q < \infty$, then the weighted Steklov problem, definite
or not, satisfies the Weyl law \eqref{eq:weylindefinite}.

\subsection{The Neumann problem}
\label{sec:Neumann}
The methods developed in this paper can be also  applied to the Neumann problem
\begin{equation}
  \begin{cases}
    - \Delta_g u = \lambda u & \text{in } \Omega \\
    \del_\nu u = 0 & \text{on } \del \Omega.
  \end{cases}
\end{equation}
In this case, the conformal map $\phi : \Omega_\CC \to \Omega$ gives rise to the
variationally isospectral weighted Neumann problem
\begin{equation}
  \begin{cases}
  - \Delta_g u = \lambda \abs{\rd\phi}^2 u & \text{in } \Omega_\CC \\
  \del_\nu u = 0 & \text{on } \del \Omega_\CC.
\end{cases}
\end{equation}
If the boundary is regular enough,  the Neumann spectrum is discrete and we aim for a Weyl law of the form
\begin{equation}
\label{eq:netrusov}
  N_{\text{Neu}}(\lambda) = \frac{\operatorname{Area}(\Omega)}{4\pi}  \lambda + \smallo \lambda.
\end{equation}
This problem is well studied,  and,  in particular,  we already know that the Weyl
élaw holds for a large class of domains with rough boundary. For instance, it is shown in \cite{netrusovsafarov} that \eqref{eq:netrusov}  for every
domain whose boundary is of the H\"older class $\RC^{0,\alpha}$ for $\alpha >
1/2$, see also \cite{netrusov} It is also shown in \cite{netrusovsafarov} that domains with finite
straight cusps of any speed satisfy \eqref{eq:netrusov}. Moreover, sharp remainder estimates have been obtained in many cases.

A straightforward adaptation of the methods developed in this paper yields an alternative
proof  of \eqref{eq:netrusov}  provided $\abs{\rd \phi}^2 \in \RL \log \RL(\Omega_\CC)$. This
is shown in essentially the same way as Lemma \ref{lem:controlledcounting}. While this approach does not give sharp remainder estimates, it is significantly more elementary.
Using \eqref{eq:cusp}, we
note that the class of domains for which $\abs{\rd \phi}^2 \in \RL \log
\RL(\Omega_\CC)$ includes any cusp of polynomial speed.

\bibliographystyle{alpha}
\bibliography{weyl_lipnew}

\end{document}